\newtheorem{theorem}{Theorem}[section]
\newtheorem{proposition}[theorem]{Proposition}
\newtheorem{lemma}[theorem]{Lemma}
\newtheorem{corollary}[theorem]{Corollary}
\theoremstyle{definition}
\newtheorem{definition}[theorem]{Definition}
\theoremstyle{remark}
\newtheorem{remark}[theorem]{Remark}
\newcommand{\N}{\mathbb{N}}
\newcommand{\Z}{\mathbb{Z}}
\newcommand{\R}{\mathbb{R}}
\newcommand{\cH}{{\cal H}}
\newcommand{\cU}{{\cal U}}
\newcommand{\Zjs}{{\cal Z}}
\newcommand{\Ainfty}{{A^{\infty}}}
\newcommand{\Atinfty}{{A_{\rm t}^{\infty}}}
\newcommand{\Acentral}{{A_{\infty}}}
\newcommand{\Atcentral}{{A_{{\rm t} \infty}}}
\newcommand{\czero}{{c_0}}
\newcommand{\ctzero}{{c_{{\rm t} 0}}}
\newcommand{\deltae}{\partial_{\rm e}}
\newcommand{\tphi}{\widetilde{\varphi}}
\newcommand{\tI}{\widetilde{I}(A_0, A_1)}
\newcommand{\I}{I(A_0, A_1)}
\newcommand{\piomega}{\pi_{\omega}}
\newcommand{\Homega}{{\cal H}_{\omega}}
\DeclareMathOperator{\id}{id}
\DeclareMathOperator{\Imag}{Im}
\DeclareMathOperator{\Aff}{Aff}
\DeclareMathOperator{\Ad}{Ad}
\DeclareMathOperator{\bd}{bd}
\DeclareMathOperator{\conv}{conv}
\DeclareMathOperator{\supp}{supp}
\DeclareMathOperator{\Lip}{Lip}
\begin{document}
\title{Trace spaces of simple nuclear C$^*$-algebras with finite-dimensional extreme boundary}
\author{
Yasuhiko Sato \\
Kyoto University / University of Oregon\\
}
\date{ }

\maketitle

\begin{abstract}   
Let $A$ be a unital separable simple infinite-dimensional nuclear C$^*$-algebra with at least one tracial state. We prove that if the trace space of $A$ has compact finite-dimensional extreme boundary then there exist  unital embeddings of matrix algebras into a certain central sequence algebra of $A$ which is determined by the uniform topology on the trace space. As an application, it is shown  that if furthermore $A$ has strict comparison then $A$ absorbs the Jiang-Su algebra tensorially.
\end{abstract}

\section{Introduction}\label{Sec1}

\

\

\hrule width 6cm

\

\

The following is the main result of this paper.

\begin{theorem}\label{MainThm}
Let $A$ be a unital separable simple infinite-dimensional nuclear C$^*$-algebra with at least one tracial state. Suppose that the extreme boundary of $T(A)$ is a compact finite-dimensional space. Then for any $k\in \N$ there exists a unital embedding of the $k$ by $k$ matrix algebra into a variant of the central sequence algebra of $A$ defined by
\[ A'\cap \left(\l^{\infty} (\N, A) /\{ (a_n)_n \in \l^{\infty}(\N,A) : \lim_{n\rightarrow\infty} \max_{\tau\in T(A)} \tau(a_n^*a_n) =0\}\right).\]
\end{theorem}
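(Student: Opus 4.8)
\emph{The plan.} By separability of $A$ it suffices to produce, for every finite subset $F\subseteq A$ and every $\varepsilon>0$, a unital completely positive map $\psi\colon M_k\to A$ with $\|\psi(xy)-\psi(x)\psi(y)\|_{2,T(A)}\le\varepsilon\|x\|\,\|y\|$ and $\|[\psi(x),a]\|_{2,T(A)}\le\varepsilon\|x\|$ for all $x,y\in M_k$ and $a\in F$, where $\|b\|_{2,T(A)}^{2}=\max_{\tau\in T(A)}\tau(b^{*}b)$: exhausting $A$ by an increasing sequence of finite sets $F$ with $\varepsilon\to0$, the resulting maps $\psi_n$ induce a unital $*$-homomorphism $M_k\to A'\cap A_{\infty}$ into the quotient of the statement, hence (as $M_k$ is simple) an embedding. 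Two reductions. First, $\tau\mapsto\tau(b^{*}b)$ is affine and weak-$*$ continuous on the Choquet simplex $T(A)$ and therefore attains its maximum at an extreme point, so $\|b\|_{2,T(A)}^{2}=\max_{\tau\in X}\tau(b^{*}b)$ with $X:=\partial_{\mathrm e}T(A)$ compact metrizable, $\dim X=d<\infty$; only the GNS data over $X$ will matter. Second, for each $\tau\in X$ the factor $N_\tau:=\pi_\tau(A)''$ is a II$_1$ factor, since $A$ is unital, simple and infinite-dimensional, and by nuclearity of $A$ and Connes' theorem it is the hyperfinite II$_1$ factor $\mathcal R$; in particular every $N_\tau$ is McDuff.

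\emph{Local step.} Fix $\tau_0\in X$ and a free ultrafilter $\omega$ on $\N$. McDuff-ness gives a unital embedding $M_k\hookrightarrow N_{\tau_0}'\cap N_{\tau_0}^{\omega}$, and a reindexation inside the tracial von Neumann algebra $N_{\tau_0}$ converts it into a unital completely positive map $\chi\colon M_k\to N_{\tau_0}$ whose multiplicative defect and whose commutators with $\pi_{\tau_0}(F)$ are as small as we wish in $\|\cdot\|_{2,\tau_0}$. Applying the Kaplansky density theorem to the finitely many matrix coefficients of $\chi$ lifts it to a unital completely positive $\psi_{\tau_0}\colon M_k\to A$ with the same estimates at $\tau_0$, up to $\varepsilon/2$. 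Since the functions $\tau\mapsto\tau\big((\psi_{\tau_0}(xy)-\psi_{\tau_0}(x)\psi_{\tau_0}(y))^{*}(\psi_{\tau_0}(xy)-\psi_{\tau_0}(x)\psi_{\tau_0}(y))\big)$ and $\tau\mapsto\tau\big([\psi_{\tau_0}(x),a]^{*}[\psi_{\tau_0}(x),a]\big)$ are continuous on $X$, these estimates hold with constant $\varepsilon$ on an open neighbourhood $U_{\tau_0}\subseteq X$ of $\tau_0$.

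\emph{Globalization.} The sets $\{U_{\tau_0}\}_{\tau_0\in X}$ cover the compact space $X$, and since $\dim X=d$ one can pass to a finite open refinement that decomposes into $d+1$ colours $\mathcal W_0,\dots,\mathcal W_d$, each $\mathcal W_c$ a finite family of pairwise disjoint open sets with every $W\in\mathcal W_c$ contained in some $U_{\tau(W)}$. The core of the argument is to build, colour by colour, a unital completely positive map $\Psi_c\colon M_k\to A$ together with a positive contraction $h_c\in A$ which is an approximate projection, approximately commutes with $F$ and with the range of $\Psi_c$ in $\|\cdot\|_{2,T(A)}$, has $\tau(h_c)\approx1$ for $\tau\in\bigcup\mathcal W_c$ and $\tau(h_c)\approx0$ outside a slight enlargement, satisfies $\sum_{c=0}^{d}h_c\approx1$ with the $h_c$ of approximately orthogonal support, and is such that over each $\tau\in W\in\mathcal W_c$ the pair $(h_c,\Psi_c)$ is $\|\cdot\|_{2,\tau}$-close to $(1,\psi_{\tau(W)})$. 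One produces the $h_c$ from a continuous partition of unity subordinate to $\{\bigcup_{W\in\mathcal W_c}W\}_{c=0}^{d}$, interpolating its values by central sequences of projections of prescribed trace inside each fibre $N_\tau\cong\mathcal R$ and gluing along the disjointness within each colour, the error staying of order $(d+1)\varepsilon$ because there are only $d+1$ colours. Then $\psi(x):=\sum_{c=0}^{d}h_c^{1/2}\Psi_c(x)h_c^{1/2}$, after the harmless rescaling making it exactly unital, is completely positive and---by the routine computation in which, over each $\tau\in X$, only colours with $h_c(\tau)\approx1$ contribute and there $\Psi_c\approx\psi_{\tau(W)}$ is approximately multiplicative---approximately multiplicative and approximately central in $\|\cdot\|_{2,T(A)}$; rescaling $\varepsilon$ gives the map demanded in the first paragraph.

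\emph{Main obstacle.} The whole difficulty lies in the globalization: one has to realize a prescribed continuous partition of unity on $X$ by positive contractions of $A$ itself---there is no copy of $C(X)\otimes A$ available---that are at once approximately central, approximately projections, of approximately orthogonal support, and compatible with the locally chosen $\psi_{\tau_0}$. The two hypotheses are exactly what make this feasible: the finite covering dimension of $X$ bounds the number of colours, hence the accumulated error, while nuclearity of $A$, via Connes' theorem, forces each fibre $N_\tau$ to be the hyperfinite II$_1$ factor and so supplies the central sequences of projections of arbitrary trace used in the interpolation. Marshalling these estimates through a standard reindexation then delivers the conclusion for the sequential central sequence algebra of the statement.
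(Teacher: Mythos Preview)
Your overall architecture---a local McDuff step at each extremal trace, then globalization via a $(d+1)$-coloured refinement---matches the paper's. The gap is in the globalization, and it is not cosmetic: your description of the $h_c$ is internally inconsistent, and the claimed multiplicativity of $\psi$ fails at overlaps. You ask that the $h_c$ be approximate projections with approximately orthogonal support, that $\sum_c h_c\approx 1$, \emph{and} that $\tau(h_c)\approx 1$ for every $\tau\in\bigcup\mathcal W_c$. At a point $\tau$ covered by two colours (such points exist as soon as $d\ge 1$) the last condition forces $\tau(h_c)\approx 1$ for two distinct $c$, contradicting $\sum_c h_c\approx 1$. If instead the $h_c$ realise a genuine partition of unity $(f_c)_c$, then at an overlap point each $f_c(\tau)$ lies strictly between $0$ and $1$, several colours contribute to $\psi(x)=\sum_c h_c^{1/2}\Psi_c(x)h_c^{1/2}$ with comparable weights, and a convex combination of \emph{different} approximate $*$-homomorphisms is not approximately multiplicative; your ``routine computation in which only colours with $h_c(\tau)\approx 1$ contribute'' breaks down exactly there. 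One might hope to rescue this by making the $h_c$ pairwise orthogonal approximate projections with $\tau(h_c)\approx f_c(\tau)$, but you do not supply that construction, and you also leave unexplained how to build a single \emph{unital} $\Psi_c$ that is approximately multiplicative on all of $\operatorname{supp}(f_c)$, a disjoint union of patches on which distinct local models $\psi_{\tau(W)}$ are prescribed.

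The paper's machinery is designed precisely to sidestep this overlap problem. Instead of a single approximately multiplicative map, Proposition~5.1 produces (by induction on dimension through boundaries, using the norm-orthogonality of Lemma~4.2(ii) only \emph{within} a colour) $d+1$ completely positive \emph{order-zero} maps $\varphi_0,\dots,\varphi_d$ from $M_k$ into the tracial central sequence algebra, with pairwise commuting ranges and $\sum_l\varphi_l(1_k)=1$; only commutation, not orthogonality, is demanded across colours. These are exactly the universal relations of the generalized dimension-drop algebra $\Delta_{d,k}$ (Corollary~2.3(i)), yielding a unital $*$-homomorphism from $\Delta_{d,k}$. The final step is the purely algebraic fact that $M_k$ embeds unitally into $\Delta_{d,k}$, proved by transporting $x\otimes 1_k$ to $1_k\otimes x$ along a path of unitaries in $M_k\otimes M_k$ from $1$ to the flip (Lemma~2.1(ii)); this path-of-unitaries trick is what absorbs the non-orthogonal overlap between colours and stands in for the orthogonality you could not arrange.
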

Here, we denote by $T(A)$ the set of tracial states of $A$ which is called {\it  trace space} in \cite{Goo}, and we identify $A$ with the C$^*$-subalgebra of equivalence classes of constant sequences. 
As a main application of this theorem, we present the following result. Once we know the above theorem, the proof of this corollary can be obtained in the same way as the proof of \cite[Theorem 1.1]{MS2}.

\begin{corollary}\label{MainCor}
If $A$ and $T(A)$ satisfy the same conditions in the above theorem, then the following are equivalent:
\begin{enumerate}
\item $A\otimes\Zjs\cong A$.
\item $A$ has strict comparison.
\item Any completely positive map from $A$ to $A$ can be excised in small central sequences.
\item $A$ has property (SI).
\end{enumerate}

\end{corollary}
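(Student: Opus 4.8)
The plan is to deduce Corollary~\ref{MainCor} from Theorem~\ref{MainThm} by following the argument of \cite[Theorem~1.1]{MS2} essentially verbatim: the only place where that argument uses a hypothesis on $T(A)$ is to invoke unital embeddings of matrix algebras $M_k$ into $\Acentral$, and these are now available in the present generality by Theorem~\ref{MainThm}. I would prove the equivalences through the cycle $(i)\Rightarrow(ii)\Rightarrow(iii)\Rightarrow(iv)\Rightarrow(i)$. The implication $(i)\Rightarrow(ii)$ is independent of the trace space: it is R\o rdam's theorem that $\Zjs$-stability makes the Cuntz semigroup almost unperforated, which gives strict comparison. The implications $(ii)\Rightarrow(iii)$ and $(iii)\Rightarrow(iv)$ reproduce the corresponding steps of \cite{MS2}: strict comparison is used to compare the tracial sizes of the positive elements produced by a given completely positive map and so to excise it along small central sequences, and property~(SI) is the specialisation of that excision property to the maps relevant to it. At each point where \cite{MS2} quoted the finite-extreme-boundary form of the matrix embeddings --- in deriving the excision property and property~(SI) from strict comparison, and in the passage to $\Zjs$-stability --- one quotes Theorem~\ref{MainThm} instead.

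The substantive implication is $(iv)\Rightarrow(i)$. Given property~(SI) together with, for every $k$, a unital embedding $M_k\hookrightarrow\Acentral$, one assembles inside $\Acentral$ a unital copy of a prime dimension-drop C$^*$-algebra: the matrix subalgebras supplied by Theorem~\ref{MainThm} furnish the two endpoint matrix blocks, while property~(SI) supplies the connecting completely positive order-zero data carrying the largeness/smallness dichotomy of traces that welds them into a dimension-drop copy. Property~(SI) also upgrades this configuration from the uniform-trace central sequence algebra $\Acentral$ to the norm central sequence algebra of $A$; once a prime dimension-drop algebra --- equivalently $\Zjs$ itself --- embeds unitally there, the Toms--Winter criterion yields $A\otimes\Zjs\cong A$. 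This is precisely the route of \cite[Theorem~1.1]{MS2}; every use there of "finitely many extreme traces" factored through the matrix embeddings and the (SI) argument, both of which now hold for $A$ with $\deltae T(A)$ compact and finite-dimensional.

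I do not expect a genuinely new obstacle here: the difficulty has been concentrated into Theorem~\ref{MainThm}, and modulo it the corollary is a faithful transcription of \cite{MS2}. The one step deserving care in the writing is the (SI)-powered passage from $\Acentral$ to the norm central sequence algebra in the proof of $(iv)\Rightarrow(i)$ --- the technical core of \cite{MS2} --- where one must verify that the argument uses only strict comparison and the conclusion of Theorem~\ref{MainThm}, and nothing about the cardinality or the geometry of $\deltae T(A)$.
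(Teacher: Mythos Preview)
Your proposal is correct and matches the paper's own treatment, which simply refers the reader to the proof of \cite[Theorem~1.1]{MS2} with Theorem~\ref{MainThm} supplying the matrix embeddings. One notational slip: in the paper's conventions $\Acentral$ denotes the \emph{norm} central sequence algebra, while the embeddings of Theorem~\ref{MainThm} land in the tracial version $\Atcentral$; your text uses $\Acentral$ where $\Atcentral$ is meant.
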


\section{A generalization of dimension drop algebras}\label{Sec2}

The unital embedding in the main theorem will be constracted as a factorization through a kind of dimension drop algebra $\Delta_{d,k}$ in Definition \ref{DefDelta}. The aim of this section is to introduce a generalization of dimension drop algebras and to obtain the universality of $\Delta_{d,k}$.

Throughout this section, we fix a separable infinite-dimensional Hilbert space $\cH$ and consider two unital separable nuclear C$^*$-algebras $A_i$, $i=0,1$. A completely positive contraction from $A_i$ to $B(\cH)$ is called {\it order zero} (or disjointness preserving) if it preserves orthogonality, \cite{WZ}, \cite{Wol}. Let $\Lambda_i$ be the set of all order zero completely positive contractions from $A_i$ to $B(\cH)$ for each $i=0,1,$ and let 
\begin{align*} \Lambda =\{ & (\varphi_0, \varphi_1)\in \Lambda_0\times\Lambda_1 \\ &:\quad
 [\varphi_0(a_0), \varphi_1(a_1)]=0\ {\rm for\ any\ }a_i \in A_{i}, i=0,1,\quad {\rm and}\quad \varphi_0(1_{A_0})+ \varphi_1(1_{A_1}) =1_{B(\cH)}\}. 
\end{align*}
We define two maps $\tphi_i:A_i \rightarrow \bigoplus_{\lambda\in\Lambda} \cH$, $i=0,1$, by 
\[ \tphi_i(a) = \bigoplus_{(\varphi_{0,\lambda}, \varphi_{1,\lambda})\in \Lambda}\varphi_{i,\lambda} (a), \quad a\in A_i,\ i=0,1.\]
Then it follows that $\tphi_i$, $i=0,1$, are also order zero completely positive contractions such that $[\tphi_0(a_0),$ $ \tphi_1(a_1)]=0$ for any $a_i\in A_i$, $i=0,1$, and $\tphi_0(1_{A_0}) + \tphi_1(1_{A_1}) = 1$. We denote by $\tI$ the C$^*$-subalgebra of $B(\bigoplus_{\Lambda} \cH)$ generated by $\tphi_0(A_0)\cup \tphi_1(A_1)$. It is not hard to check that $\tI$ satisfies the universal property for the relations in $\Lambda$.

Letting 
\[ \I= \{ f\in C([0,1])\otimes A_0 \otimes A_1\ : \  f(0)\in A_0\otimes 1_{A_1},\ f(1)\in 1_{A_0}\otimes A_1\}, \]
we have the following lemma. The argument in the proof is a slight generalization of  \cite[Proposition 7.3]{JS}, and \cite[Proposition 2.5]{RW}.
\begin{lemma}\label{LemUniv}
\
\begin{enumerate}
\item $\tI$ is isomorphic to $\I$. 
\item If both $A_0$ and $A_1$ contain $M_k$ unitally then $I(A_0,A_1)$ also contains $M_k$ unitally.
\end{enumerate}
\end{lemma}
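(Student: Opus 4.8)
The plan is to prove (i) by recognising that \emph{both} $\tI$ and $\I$ are the universal C$^*$-algebra for the system of relations appearing in the definition of $\Lambda$ (order zero completely positive contractions with commuting ranges summing to the unit on units), and then to read off (ii) from the concrete description of $\I$ as an algebra of $A_0\otimes A_1$-valued paths.

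For (i), recall from the excerpt that $\tI$, with the maps $\tphi_0,\tphi_1$, has the universal property for pairs of order zero completely positive contractions $(\varphi_0,\varphi_1)$, $\varphi_i\colon A_i\to\cdot$, with commuting ranges and $\varphi_0(1_{A_0})+\varphi_1(1_{A_1})=1$. First I would exhibit such a pair inside $\I$ itself: put $\psi_0(a_0)(t)=(1-t)(a_0\otimes 1)$ and $\psi_1(a_1)(t)=t(1\otimes a_1)$ for $t\in[0,1]$. One checks directly (each is a positive central scalar function times a $*$-homomorphism) that $\psi_0,\psi_1$ are order zero completely positive contractions into $\I$ with commuting ranges and $\psi_0(1_{A_0})+\psi_1(1_{A_1})=1_{\I}$, so the universal property of $\tI$ produces a $*$-homomorphism $\Phi\colon\tI\to\I$ with $\Phi\circ\tphi_i=\psi_i$. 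Its image is $C^*(\psi_0(A_0)\cup\psi_1(A_1))$, and I would argue this is all of $\I$: since each $\psi_i$ is an injective order zero map, $C^*(\psi_0(A_0))$ and $C^*(\psi_1(A_1))$ are the associated cones, realised inside $\I$ as $C_0([0,1),A_0\otimes 1)$ and $C_0((0,1],1\otimes A_1)$, and their product is $C_0((0,1),A_0\otimes A_1)$; the decomposition
\[ f=\bigl(f-(1-t)f(0)-tf(1)\bigr)+(1-t)f(0)+tf(1)\qquad(f\in\I)\]
then places every $f\in\I$ in $C^*(\psi_0(A_0)\cup\psi_1(A_1))$, so $\Phi$ is surjective.

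For injectivity of $\Phi$ I would show that $\I$, together with $\psi_0,\psi_1$, \emph{also} satisfies the universal property above; uniqueness of universal objects then yields an inverse $\Psi\colon\I\to\tI$. So let $\eta_i\colon A_i\to B$ be order zero completely positive contractions with commuting ranges and $\eta_0(1_{A_0})+\eta_1(1_{A_1})=1_B$, where without loss of generality $B=C^*(\eta_0(A_0)\cup\eta_1(A_1))$. By the structure theorem for order zero maps \cite{WZ}, each $\eta_i$ extends to a $*$-homomorphism $\rho_i\colon C_0((0,1])\otimes A_i\to B$, and $h:=\eta_0(1_{A_0})=1_B-\eta_1(1_{A_1})$ is a positive contraction in $Z(B)$. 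After reparametrising the cone coordinate ($1-t$ for $\rho_0$, $t$ for $\rho_1$), $\rho_0$ and $\rho_1$ become $*$-homomorphisms on the two generating subalgebras $K_0\cong C_0([0,1),A_0)$ and $K_1\cong C_0((0,1],A_1)$ of $\I$; these subalgebras commute in $\I$, their images in $B$ commute, they generate $\I$, and the two maps agree on $K_0\cap K_1\cong C_0((0,1))$ (a scalar-valued function $g$ goes to $g(1_B-h)$ either way). I would then glue them to a $*$-homomorphism $\I\to B$ with $\psi_i\mapsto\eta_i$, using nuclearity of the $A_i$ to form the tensor-product $*$-homomorphisms that arise fibrewise. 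This is the promised generalisation of \cite[Proposition 7.3]{JS} and \cite[Proposition 2.5]{RW}, and the gluing — in particular the endpoints $t\in\{0,1\}$, where $h=0$ or $h=1$, the maps $\eta_i$ degenerate, and the generating cones no longer see the $A_i$ unitally — is exactly the main obstacle; resolving it needs a careful analysis of the hereditary subalgebras of $B$ cut out by spectral data of $h$, which I would carry out as in those references.

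Finally, (ii) is immediate from this path picture and requires no reference to $\tI$. Fix unital embeddings $\iota_i\colon M_k\hookrightarrow A_i$ and form the unital embedding $\iota_0\otimes\iota_1\colon M_k\otimes M_k\hookrightarrow A_0\otimes A_1$. The two unital embeddings $x\mapsto x\otimes 1$ and $x\mapsto 1\otimes x$ of $M_k$ into $M_k\otimes M_k\cong M_{k^2}$ are unitarily equivalent (both have multiplicity $k$) and the unitary group of $M_{k^2}$ is connected, so there is a norm-continuous path of unitaries $u_t\in M_{k^2}$, $t\in[0,1]$, with $u_0=1_{M_{k^2}}$ and $u_1(x\otimes 1)u_1^*=1\otimes x$; then $\phi_t(x):=u_t(x\otimes 1)u_t^*$ is a continuous path of unital embeddings $M_k\to M_{k^2}$ from $x\mapsto x\otimes 1$ to $x\mapsto 1\otimes x$. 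Composing with $\iota_0\otimes\iota_1$, the assignment
\[ x\longmapsto\Bigl(t\mapsto(\iota_0\otimes\iota_1)\bigl(\phi_t(x)\bigr)\Bigr)\]
takes values in $\I$, because at $t=0$ it lies in $A_0\otimes 1$ and at $t=1$ in $1\otimes A_1$, and it is evidently a unital $*$-homomorphism $M_k\to\I$, hence injective. Thus $\I$ contains $M_k$ unitally.
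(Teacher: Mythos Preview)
Your proof of (ii) is correct and essentially identical to the paper's: both use a continuous path of unitaries in $M_k\otimes M_k$ from the identity to a unitary conjugating $x\otimes 1$ to $1\otimes x$, and push the resulting path of embeddings through the given unital inclusions $\iota_i$.

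For (i), your setup and surjectivity argument agree with the paper's (your explicit decomposition is a concrete form of the paper's appeal to partitions of unity in $C^*(\{z\})\cong C([0,1])$). The genuine gap is in injectivity. You propose to show that $\I$ itself is universal for the relations in $\Lambda$, so that $\Phi$ has an inverse; but you correctly flag the gluing of the two cone homomorphisms into a single $*$-homomorphism $\I\to B$ as ``exactly the main obstacle'' and then defer it to \cite{JS} and \cite{RW}. That step is not routine: one can form $\pi_0\otimes\pi_1$ on $K_0\cdot K_1\cong C_0((0,1),A_0\otimes A_1)$ using nuclearity, but the support homomorphisms $\pi_i$ live only in $B''$, and extending compatibly across the endpoints --- where one of the $\eta_i$ degenerates --- is precisely the content that needs to be supplied. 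As written, the proposal does not do this.

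The paper avoids this entirely by proving injectivity of $\Phi$ directly, never constructing a map out of $\I$ into a general target. It applies the order-zero structure theorem to $\tphi_0,\tphi_1$ themselves, obtaining $*$-homomorphisms $\pi_i\colon A_i\to\tI''$ and hence (by nuclearity) $\rho=\pi_0\otimes\pi_1\colon A_0\otimes A_1\to\tI''$. For each pure state $\omega$ of $\tI$ the central elements $\tphi_i(1_{A_i})$ become scalars $t_{\omega,i}$ with $t_{\omega,0}+t_{\omega,1}=1$, and one checks on generators that $\pi_\omega=\pi_\omega\circ\rho\circ\pi_{t_{\omega,1}}\circ\Phi$; since every nonzero positive element of $\tI$ survives in some $\pi_\omega$, this forces $\ker\Phi=0$. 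This pure-state argument is the ``slight generalization'' of \cite{JS}, \cite{RW} the paper has in mind, and it cleanly bypasses the endpoint analysis you left open.
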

\begin{proof}[Proof of (i).]
First, we define $z\in I(A_0, A_1)$ and $\varphi_i: A_i \rightarrow I(A_0, A_1)$, $i=0, 1$ by $z(t)=t 1_{A_0}\otimes 1_{A_1} \in A_0\otimes A_1$ for $t\in [0,1]$, and
\[\varphi_0(a)= (1-z)a\otimes 1_{A_1}\ {\rm for\ } a\in A_0\quad {\rm and\quad } \varphi_1 (a) =z 1_{A_0}\otimes a_1\ {\rm for\ } a\in A_1.\] It follows that $\varphi_i: A_i \rightarrow I(A_0,A_1)$, $i=0,1$, are order zero completely positive contractions satisfying $[\varphi_0 (a_0), \varphi_1(a_1)]=0$ for $a_i\in A_i$, $i=0,1$, and $\varphi_0(1_{A_0})+\varphi_1(1_{A_1}) =1$. Taking a unital faithful representation of $I(A_0, A_1)$ on $\cH$ we regard $I(A_0, A_1)$ as a unital C$^*$-subalgebra of $B(\cH)$, thus this means that $(\varphi_0, \varphi_1)\in \Lambda$. Let $\Phi$ $:\tI \rightarrow \I$ be the $*$-homomorphism defined by the canonical projection $B(\bigoplus_{\lambda\in\Lambda}\cH)\rightarrow B(\cH)$ determined by $(\varphi_0, \varphi_1)\in\Lambda$. From these definitions we have 
\[\Phi\circ\tphi_i (a)= \varphi_i (a) \quad {\rm for\ } a\in A_i,\ i=0,1.\]

Now we have $\Imag(\Phi)(0)= A_0\otimes 1_{A_1}$, $\Imag(\Phi)(1) = 1_{A_0}\otimes A_1$, and $\Imag(\Phi)(t)=A_0\otimes A_1$ for $t\in (0,1)$, where we set $B(t)=\{f(t): f\in B\}$ for a C$^*$-subalgebra $B\subset \I$ and $t\in [0,1]$. Then by partitions of unity in $C^*(\{z\})\cong C([0,1])$, the standard argument implies that $\Phi$ is surjective.

Because $\tphi_i$, $i=0,1$ preserve orthogonality, by  \cite[Theorem 2.3]{WZ} or \cite[Theorem 2.3]{Wol}, we obtain the $*$-homomorphism $\pi_i: A_i \rightarrow \tI '' \subset B(\bigoplus_{\lambda\in\Lambda} \cH)$, $i=0,1$, such that 
\[[\pi_i(a), \tphi_i(1_{A_i})]=0 \quad {\rm and}\quad \tphi_i (a) =\tphi_i(1_{A_i})\cdot \pi_i(a)\quad { \rm for\ } a\in A_i, i=0,1.\] Indeed, these $\pi_i(a)$ were constructed as the strong limit of $f_n(\tphi_i(1_{A_i}))\cdot\tphi_i(a)$ for some positive functions $f_n\in C([0,\infty))$, $n\in\N$. Then we also have $[\pi_0(a_0), \pi_1(a_1)] =0$ for $a_i\in A_i$, $i=0,1$. Set $\rho =\pi_0\otimes \pi_1$ $:A_0\otimes A_1\rightarrow \tI''$ and set $\pi_t :\I\rightarrow A_0\otimes A_1$ as $\pi_t(f)=f(t)$ for $t\in [0,1]$ and $f\in \I$.

For a pure state $\omega$ of $\tI$, let $(\piomega,\Homega)$ be the GNS-representation associated with $\omega$. Since $\tphi_i (1_{A_i})$, $i=0,1$, are in the centre of $\tI$ we obtain scalar values $t_{\omega, i}\in [0,1]$, $i=0,1$,  such that $t_{\omega, i} 1_{B(\Homega)}= \piomega(\tphi_i(1_{A_i}))$ and $t_{\omega,0}+t_{\omega,1}=1$. Because for any nonzero $x\in \tI_+$ there exists a pure state $\omega_x$ such that $\pi_{\omega_x}(x) > 0$, in order to show that $\Phi$ is injective,  it suffices to show that 
\[ \piomega (x) = \piomega\circ\rho\circ\pi_{t_{\omega,1}}\circ\Phi (x), \]
for any $x\in\tI$ and pure state $\omega$ of $\tI$.
When $x=\tphi_i(a_i)$ for $a_i\in A_i$ we have 
\[\piomega(x) = \piomega\circ \pi_i (t_{\omega,i}a_i) 
=\piomega \circ \rho\circ \pi_{t_{\omega,1}}\circ \varphi_i (a_i) 
=\piomega\circ \rho \circ \pi_{t_{\omega,1}}\circ \Phi (x). \]
Since $\tI$ is generated by $\tphi_0 (A_0)\cup \tphi_1(A_1)$ and $\piomega\circ\rho\circ\pi_{t_{\omega,1}}\circ\Phi$ is a $*$-homomorphism we conclude that $\piomega =\piomega\circ\rho\circ\pi_{t_{\omega,1}}\circ\Phi$.
\end{proof}
\begin{proof}[Proof of (ii).]
Let $\Phi_i : M_k \rightarrow A_i$, $i=0,1$, be given unital embeddings.
We denote by $u\in M_k\otimes M_k$ the self-adjoint unitary such that $\Ad u(x\otimes y) =y\otimes x$ for $x, y\in M_k$, and we let $\widetilde{u} \in C([0,1]) \otimes M_k \otimes M_k$ be a path of unitaries such that $\widetilde{u} (0) =1$, $\widetilde{u} (1) =u$. Then the unital $*$-homomorphism $\Phi : M_k \rightarrow C([0,1])\otimes A_0\otimes A_1$ defined by 
\[\Phi (x) = \id_{C([0,1])}\otimes\Phi_0\otimes\Phi_1 \circ \Ad\widetilde{u} (1_{C([0,1])}\otimes x\otimes 1_k), \]
satisfies $\Imag (\Phi ) \subset \I$.
\end{proof}

\begin{definition}\label{DefDelta}
For $d, k\in \N$, we inductively define a unital C$^*$-algebra $\Delta_{d,k}$ by
\[ \Delta_{d,k} = I(\Delta_{d-1, k}, M_k),\quad \Delta_{0,k} =M_k. \]
 In the proof of the main theorem, we shall see that this $\Delta_{d,k}$ can be embedded into the quotient algebra in Theorem \ref{MainThm}, and this $d$ corresponds to the covering dimension of $\deltae(T(A))$.
\end{definition}

\begin{corollary}\label{CorDelta}
\ 
\begin{enumerate}
\item $\Delta_{d,k}$ is isomorphic to the universal C$^*$-algebra on generators
$\{e_{l,i} : l=0,1,...,d, i=1,2,...,k\}$ satisfying taht:
\begin{align*}  
&\sum_{l=0}^d\sum_{i=1}^k e_{l,i}^* e_{l,i} =1, \quad e_{l,i}e_{l,j}^* =\delta_{i,j} e_{l,1}^2,\quad l=0,1,...,d,\quad i,j=1,2,...,k,\\ 
& [e_{l,i}, e_{m,j}]=0,\quad l\neq m,\quad i, j = 1,2,...,k.
\end{align*}
\item $\Delta_{d,k}$ contains $M_k$ unitally. 
\end{enumerate}
\end{corollary}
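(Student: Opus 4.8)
The plan is to prove (i) by induction on $d$, peeling off the last $I(-,M_k)$ factor and matching generators with the universal relations. For the base case $d=0$, note that $\Delta_{0,k}=M_k$ is the universal C$^*$-algebra on generators $\{e_{0,i}: i=1,\dots,k\}$ with $\sum_i e_{0,i}^*e_{0,i}=1$ and $e_{0,i}e_{0,j}^*=\delta_{i,j}e_{0,1}^2$: indeed, given such generators, one checks that the $e_{0,i}e_{0,1}^{-1}$ behave like a row of matrix units (after verifying $e_{0,1}^2$ is invertible, which follows because the relations force $\sum_i e_{0,i}^*e_{0,i}$ to be a sum of $k$ equivalent projections summing to $1$, so each is a rank-one-type projection and $e_{0,1}$ is a partial isometry whose range and source projections can be pinned down), and conversely the standard matrix units $e_{i1}/\sqrt k$ of $M_k$ satisfy the relations. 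The inductive step uses Lemma \ref{LemUniv}(i): $\Delta_{d,k}\cong I(\Delta_{d-1,k},M_k)\cong I$, and by the universal property of $\widetilde I(A_0,A_1)$ established just before Lemma \ref{LemUniv} (with $A_0=\Delta_{d-1,k}$, $A_1=M_k$), $\Delta_{d,k}$ is universal for a pair of commuting order zero contractions $\varphi_0,\varphi_1$ out of $\Delta_{d-1,k}$ and $M_k$ with $\varphi_0(1)+\varphi_1(1)=1$.

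The main work is to translate that order-zero-pair universal property into the generator-relation presentation. I would set, for $l=0,\dots,d-1$, $e_{l,i}:=\varphi_0(e^{(d-1)}_{l,i})$ where $e^{(d-1)}_{l,i}$ are the universal generators of $\Delta_{d-1,k}$ (which exist by the inductive hypothesis), and set $e_{d,i}:=\varphi_1(f_{i})/(\text{normalization})$ where $f_i$ are the generators of the $d=0$ presentation of $M_k$. Because $\varphi_0$ is order zero, by the structure theorem (\cite{WZ}, \cite{Wol}) it factors as $\varphi_0(a)=h_0\,\pi_0(a)$ with $h_0=\varphi_0(1)$ central in the generated algebra and $\pi_0$ a $*$-homomorphism; this is exactly what makes $e_{l,i}^*e_{l,i}$ multiply correctly and shows $\sum_{l<d}\sum_i e_{l,i}^*e_{l,i}=h_0^2$-type expressions — one has to be a little careful that the relations as written package the order-zero data correctly, in particular that $\sum_{l=0}^{d}\sum_i e_{l,i}^*e_{l,i}=1$ encodes $\varphi_0(1)+\varphi_1(1)=1$ after squaring and that the commutation relations $[e_{l,i},e_{m,j}]=0$ for $l\ne m$ across the $l<d$ block and the $l=d$ block encode $[\varphi_0(\cdot),\varphi_1(\cdot)]=0$. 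Conversely, given any C$^*$-algebra with elements satisfying the listed relations, I would reconstruct order zero contractions $\varphi_0:\Delta_{d-1,k}\to B$ and $\varphi_1:M_k\to B$ by first building the row structure within each $l$-block (as in the $d=0$ case) to get $*$-homomorphisms $\pi$ on the matrix parts and positive central elements from the sums $\sum_i e_{l,i}^*e_{l,i}$, then invoking the inductive presentation of $\Delta_{d-1,k}$ to assemble $\varphi_0$, and finally appealing to the universal property of $\Delta_{d,k}=\widetilde I(\Delta_{d-1,k},M_k)$ to get a $*$-homomorphism $\Delta_{d,k}\to B$; checking this is inverse to the previous construction on generators finishes (i).

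For (ii), I would simply observe that $\Delta_{0,k}=M_k$ contains $M_k$ unitally, and then apply Lemma \ref{LemUniv}(ii) inductively: if $\Delta_{d-1,k}$ contains $M_k$ unitally, then since $M_k$ also contains $M_k$ unitally, Lemma \ref{LemUniv}(ii) gives that $\Delta_{d,k}=I(\Delta_{d-1,k},M_k)$ contains $M_k$ unitally. The hard part throughout is part (i): bookkeeping the passage between the ``commuting order zero pair'' description and the explicit finite presentation, especially getting the exact form of the relations right so that squaring the partial-isometry-type relation $e_{l,i}e_{l,j}^*=\delta_{i,j}e_{l,1}^2$ reproduces both the matrix-unit structure within a block and the order-zero functional calculus across blocks; once the $d=0$ identification of $M_k$ with its presentation is nailed down cleanly, the induction is essentially formal via the universal property already recorded before Lemma \ref{LemUniv}.
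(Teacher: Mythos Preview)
Your approach is essentially the paper's: prove (i) by induction on $d$, using the two universal properties (of the generator--relation algebra $\mathcal{U}_{d,k}$ and of $\widetilde{I}(\mathcal{U}_{d-1,k},M_k)$) to build mutually inverse $*$-homomorphisms, with the passage from the block relations to an order zero map out of $\mathcal{U}_{d-1,k}$ handled as in \cite[Proposition~4.1.1]{WinCovDim}, and then invoke Lemma~\ref{LemUniv}(i); prove (ii) by induction via Lemma~\ref{LemUniv}(ii). The paper simply declares the $d=0$ case ``trivial'' and does not comment on the squaring bookkeeping you flag, but the strategies coincide.
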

\begin{proof}
(ii) is straightforward from (ii) of Lemma \ref{LemUniv}. 

For $d, k\in \N$, let $\cU_{d,k}$ be the universal C$^*$-algebra in the corollary and let $\{e_{l,i}^{(d)}: l=0,1,...,d, i=1,2,...,k\} \subset \cU_{d,k}$ be a set of generators satisfying the relations in (i). Because of the universal property of $\cU_{d,k}$ and $\cU_{0,k}\cong M_k$, in a similar fashon to the proof of  \cite[Proposition 4.1.1]{WinCovDim} we can obtain order zero completely positive contractions $\varphi_0: \cU_{d-1,k} \rightarrow \cU_{d,k}$ and $\varphi_1 :M_k\rightarrow\cU_{d,k}$ such that $\varphi_0(e_{l,i}^{(d-1)}) =e_{l,i}^{(d)}$ for $l=0,1,...,d-1$, $i=1,2,...,k$, $\varphi_1 (e_{0,i}^{(0)}) =e_{d,i}^{(d)}$ for $i=1,2,...,k$, and $(\varphi_0, \varphi_1)\in \Lambda$. Set $\tphi_0 :  \cU_{d-1,k} \rightarrow \widetilde{I}(\cU_{d-1,k}, M_k)$ and $\tphi_1 : M_k \rightarrow \widetilde{I}(\cU_{d-1,k}, M_k)$ as the above.  By the universality of $\widetilde{I}(\cU_{d-1,k}, M_k)$ there exists a $*$-homomorphism $\Phi$ from $\widetilde{I}(\cU_{d-1,k}, M_k)$ onto $\cU_{d,k}$ such that $\Phi(\tphi_i(x))=\varphi_i(x)$, $i=0,1$. On the otherhand $\{\tphi_0(e_{l,i}^{(d-1)}) \}_{l,i} \cup \{ \tphi_1 (e_{0,i}^{(0)})\}_i$ satisfies the relations for $\cU_{d,k}$, then we have a unital $*$-homomorphism $\Psi: \cU_{d,k} \rightarrow \widetilde{I} (\cU_{d-1,k}, M_k)$ wich satisfies $\Psi (e_{l,i}^{(d)}) = \tphi_0(e_{l,i}^{(d-1)})$ and $\Psi (e_{d,i}^{(d)}) = \tphi_1( e_{0,i}^{(0)})$ for $l=0,1,...,d-1$, $i=1,2,...,k$. These $\Phi$ and $\Psi$ imply that $\cU_{d,k} \cong \widetilde{I} (\cU_{d-1,k}, M_k)$. 

When $d=0$ the statement is trivial. Assume that we have seen the statement for $d-1$, i.e., $\Delta_{d-1,k} \cong \cU_{d-1,k}$. By Lemma \ref{LemUniv} it follows that $\Delta_{d,k} =I(\Delta_{d-1,k}, M_k) \cong I(\cU_{d-1,k}, M_k) \cong \widetilde{I}(\cU_{d-1,k}, M_k)$. Then (ii) follows from the induction.   
\end{proof}

\section{ Central sequence algebras and tracial states}\label{Sec3}

In this section, we recall the notion of central sequence algebras and prove Corollary \ref{CorCentSeq}, which is one of the fundamental lemmas to study central sequences and tracial states. In what follows we let $A$ be a separable simple C$^*$-algebra with at least one tracial state. Define
\begin{align*}
\| a\|_{\tau} = \tau (a^*a)^{1/2},\quad \tau\in T(A),\ a\in A,\quad 
\| a\|_2 = \sup_{\tau\in T(A)} \|a\|_{\tau}, \quad  a\in A,
\end{align*}
\begin{align*}
 \czero &= \{ (a_n)_n\in \l ^{\infty} (\N, A) : \lim_{n\to\infty} \|a_n\|=0\},  &\ctzero &= \{ (a_n)_n\in \l^{\infty} (\N, A) : \lim_{n\to \infty} \|a_n\|_2 =0 \},\\
 \Ainfty &=\l ^{\infty} (\N, A) / \czero,& \Atinfty&=\l^{\infty} (\N, A)/ \ctzero,
\end{align*}
(see also \cite[Section 2]{MS3}).
Since $\czero\subset\ctzero$, we can regard $\Atinfty$ as the quotient algebra of $\Ainfty$. We identify $A$ with the C$^*$-subalgebra of $\Ainfty$ (resp. $\Atinfty$) consisting of equivalence classes of constant sequences. We let
\[ \Acentral = \Ainfty\cap A',\quad \quad  \Atcentral= \Atinfty \cap A'.\]

This $\Acentral$ is called the {\it central sequence algebra} of $A$ for C$^*$-algebras. A sequence $(a_n)_n\in \l^{\infty}(\N, A)$ is called a {\it central sequence} if $\|[a_n, x]\|\to 0$ as $n\to \infty$ for any $x\in A$. A central sequence is a representative of an element in $\Acentral$. In \cite{Bla}, B. Blackadar introduced the notion of strict comparison (for projections) by taking into consideration the uniform topology on the trace space of C$^*$-algebras. M. R\o rdam adapted that  strict comparison in order to apply Goodearl-Handelman's Hahn-Banach type theorem \cite{GH}, and he proved that $\Zjs$-absorption implies strict comparison in \cite{RorUHF}, \cite{Ror}. For this reason, we define the above 2-norm $\|\cdot \|_2$ by the uniformness on $T(A)$.

\begin{proposition}\label{PropCentSeq}
Let $A$ be a unital nuclear C$^*$-algebra. Then for any finite subset $F$ of $A$ and $\varepsilon >0$ there exist unitaries $u_1, u_2,...,u_N$ of $A$ such that 
\[ \left\| \left[\frac{1}{N}\sum_{i=1}^N \Ad u_i (a),\ b \right] \right\| <\varepsilon,\quad {\rm for\ all\ } a, b\in F.\]
\end{proposition}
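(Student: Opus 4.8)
The statement asserts that in a unital nuclear C$^*$-algebra one can average finitely many inner automorphisms to approximately land in the relative commutant of any prescribed finite set. The natural route is through a theorem essentially due to Haagerup (and in this amenability-theoretic form appearing in work of Rosenberg, Bosa--Brown--Sato--Tikuisis--White--Winter, and others): nuclearity of $A$ implies that $A$ has a strong form of \emph{approximately inner flip} / approximately central approximate units for the identity map coming from the characterization of amenable C$^*$-algebras. Concretely, the plan is to use the fact that for a unital nuclear C$^*$-algebra the identity map $\id_A$ admits, for every finite $F\subset A$ and $\varepsilon>0$, a factorization $A \xrightarrow{\psi} M_n \xrightarrow{\phi} A$ with $\phi,\psi$ completely positive contractive and $\|\phi\psi(a)-a\|<\varepsilon$ on $F$; one then realizes $\phi$ as (approximately) a convex combination of $\Ad u_i$ via the observation that a unital completely positive map into $A$ of the form $b \mapsto \sum_j c_j^* b c_j$ with $\sum c_j^* c_j = 1$ and $\sum c_j c_j^* = 1$ can be conjugated into an honest average of unitaries. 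This last point is the classical trick of building unitaries out of isometry rows.

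More carefully, here is the order I would carry out the argument. First, invoke nuclearity to get the $M_n$-factorization of $\id_A$ close to the identity on $F\cup F^*F$ (or whatever finite set is needed). Second, compose with the standard completely positive map $M_n \to M_n$ given by averaging over a unitary 1-design of $M_n$ (the ``randomizing channel''), namely $x\mapsto \frac{1}{n^2}\sum_{i} w_i^* x w_i = (\Tr x/n)\,1$ for an appropriate orthonormal basis $\{w_i\}$ of $M_n$ consisting of unitaries, in order to arrange that the composite $M_n$-valued map, after conjugation back into $A$, is genuinely an average of the form $\frac1N\sum \Ad u_i$. Third, transport the unitaries $w_i$ through the factorization: the maps $\psi$ and $\phi$ allow one to produce elements $v_i = \phi(w_i)$ (suitably corrected to be unitary using that $\phi$ is approximately multiplicative on the finite-dimensional algebra, hence $v_i$ is close to a unitary $u_i$ by functional calculus / polar decomposition in $A$). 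Fourth, estimate: $\frac1N\sum_i \Ad u_i(a)$ is within $\varepsilon'$ of $\phi\big(\frac1{n^2}\sum \Ad w_i(\psi(a))\big) = \phi\big((\tau_n\circ\psi)(a)\cdot 1\big)$, which is a \emph{scalar} times $1$, hence commutes exactly with every $b$; combining all the approximations and choosing the initial tolerances small enough relative to $\max_{b\in F}\|b\|$ gives the claimed bound.

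The main obstacle is the third step — converting the abstract completely positive factorization maps into actual \emph{unitaries} of $A$ and controlling the accumulated error. The map $\phi$ is only completely positive and only approximately multiplicative, so $\phi(w_i)$ need not be unitary, and one must show it is close in norm to a genuine unitary $u_i\in A$ with a bound depending only on the multiplicativity defect; this requires an argument that $\phi$ restricted to the (finite-dimensional) subalgebra generated by the $w_i$ is close to a $*$-homomorphism, e.g. via a near-inclusion/Christensen-type perturbation, after which polar decomposition of $\phi(w_i)$ inside $A$ (or inside a matrix amplification) yields $u_i$. A cleaner alternative, which I would actually pursue to sidestep perturbation technology, is to appeal directly to the known statement that nuclear (indeed, just ``property $\Gamma$-free'' is not needed here) unital C$^*$-algebras have the property that $\frac1N\sum\Ad u_i$ can be made approximately central — this is folklore and is exactly what is needed in \cite{MS3} — and cite it, reducing the proof of the proposition to a short derivation from that fact together with the observation that the averaged map is approximately a conditional-expectation-type map onto the scalars of the finite-dimensional piece. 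The rest is routine norm bookkeeping and choice of $\varepsilon$'s.
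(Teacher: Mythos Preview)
Your proposal has a fatal conceptual gap in Step~4, and Step~3 cannot be carried out as described.

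The decisive obstruction is this: you claim that $\frac{1}{N}\sum_i \Ad u_i(a)$ will be close to a scalar, namely $(\tau_n\circ\psi)(a)\cdot 1_A$. But for \emph{any} tracial state $\tau$ on $A$ and any choice of unitaries $u_i\in A$ one has $\tau\bigl(\frac{1}{N}\sum_i u_i^* a u_i\bigr)=\tau(a)$. Hence if the average were $\varepsilon$-close to $\lambda\cdot 1$ then $|\tau(a)-\lambda|<\varepsilon$ for every $\tau\in T(A)$. As soon as $A$ has two tracial states taking different values on $a$ this is impossible for small $\varepsilon$. The entire paper concerns algebras whose trace space may have large (finite-dimensional) extreme boundary, so ``average to a scalar'' is simply the wrong target; the averaged element must land in something like a relative commutant, not in $\C\cdot 1$.

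Step~3 fails for an independent reason: the c.p.c.\ map $\phi\colon M_n\to A$ coming from the CPAP factorization is in general \emph{not} approximately multiplicative, and there is no perturbation argument that makes it so. Indeed, $A$ need not admit any unital $*$-homomorphism from $M_n$ at all (take $A=\mathcal{Z}$), so $\phi$ cannot be close to one; consequently $\phi(w_i)$ need not be close to a unitary in $A$, and the formula $\phi(w_i)^*\phi(\psi(a))\phi(w_i)\approx\phi(w_i^*\psi(a)w_i)$ you implicitly use has no reason to hold. Christensen-type perturbation results require a nearby $*$-homomorphism to exist in the first place. Your ``cleaner alternative'' of citing the statement as folklore is of course not a proof.

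The paper's argument avoids both problems by working in $A^{**}$. Using that $A^{**}$ is AFD (Connes, Elliott, Effros--Lance), one finds a finite-dimensional $B\subset A^{**}$ approximating $F$ in strong$^*$ seminorms. Averaging by a suitable finite set of unitaries of $B$ (via Russo--Dye plus a permutation trick of Kishimoto--Sakai) pushes any contraction into the commutant of $B$ \emph{in norm}; this yields approximate commutation with the $b_f$, hence with $f$, but only weakly because $f\approx b_f$ only in $\|\cdot\|_\varphi^\sharp$. Kaplansky density replaces unitaries of $B\subset A^{**}$ by unitaries of $A$, still with weak estimates. The passage from weak to norm is then a Hahn--Banach argument applied to the convex set $\{([\Phi(a),b])_{(a,b)\in F\times F}:\Phi\in\conv\{\Ad u\}\}\subset\bigoplus_{F\times F}A$: weak closure equals norm closure, and $0$ has been shown to lie in the former.
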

\begin{proof}
A. Connes showed that any injective von Neumann algebra on a separable Hilbert space is approximately finite dimensional (AFD) \cite{Con}. Based on this result, G. A. Elliott generalized it for any von Neumann algebra \cite{Ell}. E. G. Effros and C. Lance showed that $A^{**}$ is injective if $A$ is nuclear \cite{EL}. Combining their results we can see that $A^{**}$ is AFD, i.e., for a finite subset $F$ of $A^{**}$, a finite subset $G$ of $A^*_+$, and $\varepsilon >0$, there exist a finite dimensional subalgebra $B$ of $A^{**}$ and $b_f \in B$, for $f\in F$ such that
\[ \|f - b_f \|_{\varphi}^{\sharp} < \varepsilon /6\quad {\rm for\ all \ } f\in F, \varphi\in G,\]
here we define $\|x\|_{\varphi}^{\sharp} = \sqrt{(x^*x + xx^*)/2}$ for $x\in A^{**}$, $\varphi\in A^*_+$ which induces the strong$*$ topology. By adding $1_{A^{**}}-1_B$, if necessary, we may assume that any unitary of $B$ is a unitary in $A$. To show the claim, we may assume that any $f\in F$ is self-adjoint and $\|f\|\leq 1$. Thus we also obtain $b_f\in B$ as a self-adjoint element of $B$  satisfying the above condition for $f\in F$. And by the argument in the proof of Kaplansky's density theorem, we can obtain  $b_f\in B$ as a self-adjoint element of $B$ with $\|b_f\| \leq 1$. Actually, since the continuous function $g(t)=\max \{\min \{t, 1\}, -1\}$, $t\in \R$ is a strongly continuous function (see \cite[Proposition 2.3.2]{Ped} for example) it suffices to consider $g(b_f)$.

Since the convex hull of the unitary group is norm dence in the unit ball for any C$^*$-algebra \cite{RD}, there exists a finite subset $F_B$ of unitaries in $B$ such that 
\[ \min\{ \|b_f - x\| : x\in \conv (F_B)\}  < \varepsilon/8. \]
Since $B$ is finite-dimensional, for $\varepsilon>0$ there exists another finite subset $D$ of unitaries in $B$ and permutations $\sigma_b$, $b\in F_B$, of $D$ such that 
\[ \| d\cdot b - \sigma_b(d) \| < \varepsilon/8\quad {\rm for\ } b\in F_B, d\in D.\]
The precise argument of this technique is written in \cite[Lemma 3.6]{KS}. Set $N=|D|\in \N$. For any contraction $a\in A$ and $b\in F_B$ we have 
\begin{align*}
&\|[\frac{1}{N}\sum_{d\in D} d^* a d, b]\| < \frac{1}{N} \| \sum_{d\in D} d^* a \sigma_b(d) - \sigma_b^{-1}(d)^* a d \| + \varepsilon/4 =\varepsilon /4. \\
\intertext{ From this, it follows that for any contraction $a\in A$}
& \|[\frac{1}{N} \sum_{d\in D} d^* ad, x]\| < \varepsilon/4 \quad {\rm for\ } x\in \conv (F_B), \\ 
& \|[\frac{1}{N} \sum_{d\in D} d^*a d, b_f] \| < \varepsilon/2\quad {\rm for\ } f\in F.\\  
\end{align*}
Then, for $\varphi\in G$, $f\in F$, and $a\in A$ with $\|a\|\leq 1$ we have 
\begin{align*}
&|\varphi([\frac{1}{N} \sum_{d\in D} d^* a d, f] )| \\
&\leq|\varphi([\frac{1}{N} \sum_d d^* a d, b_f])|+|\varphi(\frac{1}{N}\sum_d d^*ad(f-b_f))| + |\varphi(\frac{1}{N}\sum_d (f-b_f)d^* ad )| \\ 
&\leq \varepsilon /2 + 2\sqrt{2}\| f -b_f \|_{\varphi}^{\sharp} < \varepsilon.
\end{align*}

By Kaplansky's density theorem there exists a net  $e_{\lambda}$, $\lambda\in\Lambda$ of unitaries in $A$ such that $e_{\lambda}$ converges to $d\in A^{**}$ strongly$*$. Then $e_{\lambda}^* a e_{\lambda}$ converges to $d^* a d$ strongly for any $a\in A$. Thus, for finite subsets $F\subset A$ and $G\subset A^*_+$ we obtain unitaries $e_d \in A$, $d\in D$, satisfying that: for all $a, b\in F$
\begin{align*}
|&\varphi([\frac{1}{N} \sum_{d\in D} e_d^* a e_d, b] )| \\
&\leq |\varphi([\frac{1}{N} \sum_d d^* a d, b ])| +| \frac{1}{N} \sum_d \varphi(({e_d}^*a e_d - d^* a d)b)| + |\frac{1}{N} \sum_d \varphi(b({e_d}^*a e_d - d^* ad ))|< \varepsilon.
\end{align*}

Let $B(A,A)$ be the Banach algebra of all bounded operators from $A$ to $A$, $C_0$ the convex hull of $\{\Ad u: u {\rm \ is\ a\ unitary\ in\ }A\}$ in $B(A,A)$, and let
\[ C= \{([\Phi(a), b])_{(a,b)}\in \bigoplus_{F\times F} A : \Phi\in C_0\} .\]
Note that $C$ is also a convex set in the C$^*$-algebra $\bigoplus_{F\times F} A$. The above argument means that the weak closure of $C$ $\subset \bigoplus_{F\times F}A$ contains $0 \in \bigoplus_{F\times F} A$. Actually, what we do is as follows: For any $\varphi \in A^*$, by the Jordan decomposition, we have $\varphi_j\in A^*_+$, $j=1,2,3,4$ such that $\varphi=\sum_{j=1}^4(\sqrt{-1})^j \varphi_j$. Identifying $(\bigoplus_{F\times F}A)^*$ with $\bigoplus_{F\times F} A^*$, we let $\widetilde{G}$ be a finite subset of $\bigoplus_{F\times F} A^*$. Apply the above argument to $F\subset A$, $G=\{ \varphi_{{(a,b)}\ j}: \varphi\in \widetilde{G}, a, b \in F, j=1,2,3,4\}$, and $\varepsilon / (2|F|)^2$, then there exists $\Phi\in C_0$ such that 
\[| \varphi_{(a,b)\ j}([ \Phi (a), b])| < \varepsilon/(2|F|)^2,\quad {\rm for\ }\varphi\in \widetilde{G}, a, b\in F, j=1,2,3,4.\]
This implies 
\[|\varphi(([\Phi (a), b])_{(a,b)})|=|\sum_{a,b\in F} \varphi_{(a,b)} ([\Phi (a), b])| \leq \sum_{a,b\in F} \sum_{j=1}^4 |\varphi_{(a,b)\ j} ([\Phi(a), b])| <\varepsilon \quad {\rm for\ } \varphi \in \widetilde{G}. \]

Therefore the Hahn-Banach theorem shows that $0$ is contained in the norm closure of $C$. This means for any $\varepsilon>0$ there exists $\Phi\in C_0$ such that $\|[\Phi (a), b]\| < \varepsilon$ for $a, b\in F$. 
\end{proof}

\begin{remark}
(In the proof of the above proposition, the required $\Phi_0 \in C_0$ is heavily depend on double-dealing of a finite subset $F$ of $A$. So this proposition is much weaker than the strong amenability defined by B. E. Johnson \cite{JohText}.)
\end{remark}

\begin{corollary}\label{CorCentSeq}
Let $A$ be a unital separable nuclear C$^*$-algebra with at least one tracial state. Then for any $a\in A$ there exists a central sequence $a_n \in A$, $n\in \N$ such that $\|a_n\| \leq \| a\|$ and 
\[\tau(a) = \tau (a_n)\quad {\rm for\ any\ } \tau \in T(A) {\rm\ and\ } n\in \N.\] 
\end{corollary}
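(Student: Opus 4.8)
The plan is to run a standard exhaustion/diagonal argument on Proposition~\ref{PropCentSeq}. Fix $a\in A$ and choose a countable dense subset $\{b_m : m\in\N\}$ of $A$. For each $n\in\N$ set $F_n=\{a,b_1,\dots,b_n\}$ and apply Proposition~\ref{PropCentSeq} to this finite set $F_n$ and to $\varepsilon=1/n$, obtaining unitaries $u_1^{(n)},\dots,u_{N_n}^{(n)}$ of $A$; then define
\[ a_n=\frac{1}{N_n}\sum_{i=1}^{N_n}\Ad u_i^{(n)}(a),\qquad n\in\N. \]

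First I would record the two easy properties. Since $a_n$ is a uniform convex combination of the unitary conjugates $u_i^{(n)}a\,u_i^{(n)*}$, each of norm $\|a\|$, we get $\|a_n\|\le\|a\|$; in particular $(a_n)_n\in\l^{\infty}(\N,A)$. Since every $\tau\in T(A)$ is invariant under inner automorphisms, $\tau\!\left(u_i^{(n)}a\,u_i^{(n)*}\right)=\tau(a)$, hence $\tau(a_n)=\tau(a)$ for all $n\in\N$ and all $\tau\in T(A)$, which is the asserted exact (not merely approximate) trace identity.

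It remains to verify that $(a_n)_n$ is a central sequence, i.e.\ $\|[a_n,x]\|\to 0$ for every $x\in A$. By the choice of $F_n$ and the defining inequality of Proposition~\ref{PropCentSeq} (applied with the pair $a,b_m\in F_n$), we have $\|[a_n,b_m]\|<1/n$ whenever $m\le n$, so $\|[a_n,b_m]\|\to 0$ as $n\to\infty$ for each fixed $m$. For an arbitrary $x\in A$ and $\delta>0$ pick $b_m$ with $\|x-b_m\|<\delta$; using the uniform bound $\|a_n\|\le\|a\|$,
\[ \|[a_n,x]\|\le\|[a_n,b_m]\|+2\|a_n\|\,\|x-b_m\|\le\|[a_n,b_m]\|+2\|a\|\,\delta, \]
and letting $n\to\infty$ and then $\delta\to 0$ shows $\|[a_n,x]\|\to 0$. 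Thus $(a_n)_n$ is the desired central sequence. I do not expect any genuine obstacle here: the corollary is essentially a repackaging of Proposition~\ref{PropCentSeq}, and the only points needing (routine) care are the uniform norm bound $\|a_n\|\le\|a\|$, which is exactly what makes the $\varepsilon/3$-type passage to general $x$ legitimate, and the conjugation-invariance of tracial states, which upgrades the approximate trace control to an equality.
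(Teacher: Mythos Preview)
Your proof is correct and follows essentially the same approach as the paper: exhaust a dense subset by increasing finite sets, apply Proposition~\ref{PropCentSeq} with shrinking $\varepsilon$, and take $a_n$ to be the resulting average of unitary conjugates of $a$. Your write-up is in fact more explicit than the paper's, which simply asserts that the $a_n$ so defined satisfy the required conditions without spelling out the norm bound, the trace identity, or the density argument for centrality.
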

\begin{proof}
Since $A$ is separable there exists an increasing sequence $F_n$, $n\in \N$ of finite subsets of $A$ such that $\bigcup_{n\in \N} F_n \subset A$ is dense in the operator norm topology. Let $\varepsilon_n > 0$, $n\in \N$, be a decreasing sequence which converges to 0. Applying Proposition \ref{PropCentSeq} to $\{a \}\cup F_n$
and $\varepsilon_n > 0$ we obtain unitaries $u_{n,i}$, $i=1,2,...,N_n$ in $A$ satisfying the condition in Proposition \ref{PropCentSeq}. We define 
\[ a_n=\frac{1}{N_n} \sum_{i=1}^{N_n} \Ad u_{n,i} (a),\quad n\in \N.\] These $a_n\in A$, $n\in \N$ satisfy the required conditions. 
\end{proof}

\section{Orthogonality on the compact extreme boundary}\label{Sec4}

In this section, we give a simple main technical tool Lemma \ref{TechLem}
concerning multiplicativity and orthogonality on the compact extreme boundary $\deltae(T(A))$.

Recently, by using the next proposition M. Dadarlat and A. S. Toms investigated the dimension functions on the compact finite-dimensional extreme boundary of trace space, ( in the proof of \cite[Lemma 4.4]{DT}). This result was essentially based on the works by D. A. Edwards \cite{Ed}, J. Cuntz and G. K. Pedersen \cite{CP}, and H. Lin \cite{Lin}. The starting point of our proof of Lemma \ref{TechLem} is this proposition.

\begin{proposition}\label{PropDeltae}
Let $A$ be a unital separable simple infinite-dimensional C$^*$-algebra with at least one tracial state. Suppose that $\deltae (T(A))$ is compact. Then for any positive function $f\in C(\deltae (T(A))$ there exists a sequence $a_n$, $n\in \N$ of positive elements in $A$ such that
\[\lim_{n\to\infty}\max_{\tau\in \deltae (T(A))} |\tau(a_n) -f(\tau)| =0 \quad {\rm and}\quad \|a_n\| \leq \|f\|\ {\rm for\ }n\in \N.\]
\end{proposition}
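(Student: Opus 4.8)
The plan is to approximate $f$ on the compact metrizable space $X=\deltae(T(A))$ by functions whose values can be matched by elements of $A$, and to use simplicity together with the Cuntz--Pedersen / Edwards machinery encoded in the earlier cited work. First I would reduce to the case $\|f\|\le 1$ by scaling, and observe that since $X$ is compact and metrizable, $C(X)$ is separable; so it suffices to produce, for each $\varepsilon>0$, a single positive $a\in A$ with $\|a\|\le 1$ and $\max_{\tau\in X}|\tau(a)-f(\tau)|<\varepsilon$, and then diagonalize over a dense sequence of $f$'s and $\varepsilon_n\to 0$. The key point is that the restriction map $T(A)\to C(X)$, $\tau\mapsto$ (the affine function $\sigma\mapsto\sigma$) dualizes: by a theorem going back to Edwards (and used by Dadarlat--Toms), because $X$ is compact, every continuous function on $X$ extends to a continuous affine function on $T(A)$; that is, the natural map $\Aff(T(A))\to C(X)$ is surjective. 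Hence $f$ is the restriction to $X$ of a continuous affine function $\widehat{f}$ on the whole simplex(-like set) $T(A)$, with $\|\widehat f\|=\|f\|$.

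Next I would invoke the Cuntz--Pedersen theorem together with the fact that $A$ is simple, unital and infinite-dimensional: the image of the self-adjoint part $A_{\mathrm{sa}}$ under the evaluation map $a\mapsto\widehat a$ (where $\widehat a(\tau)=\tau(a)$) is \emph{dense} in $\Aff(T(A))$ in the supremum norm — this is exactly the content of the results of Cuntz--Pedersen \cite{CP} and Lin \cite{Lin} cited before the proposition. Therefore there is a self-adjoint $b\in A$ with $\|\widehat b-\widehat f\|_{\infty,T(A)}<\varepsilon/3$, hence $\max_{\tau\in X}|\tau(b)-f(\tau)|<\varepsilon/3$. The remaining issue is that $b$ need not be positive and need not satisfy $\|b\|\le 1$. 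To fix this, note $\tau(b)=\widehat b(\tau)\in[-\varepsilon/3,\,1+\varepsilon/3]$ for all $\tau\in T(A)$, so if we truncate by the continuous function $g(t)=\max\{\min\{t,1\},0\}$ and set $a=g(b)$, then $a$ is positive with $\|a\|\le 1$; and since $g$ moves each point of $[-\varepsilon/3,1+\varepsilon/3]$ by at most $\varepsilon/3$, while $0\le\tau((b-a))$ can be controlled — more precisely $|\tau(b)-\tau(a)|\le\|g(b)-b\|_{\text{on }\spec(b)}$ is not quite immediate because the spectrum of $b$ may be larger than its range of traces. Here one uses that $\tau\mapsto\tau(b)$ and $\tau\mapsto\tau(a)$ are both affine continuous and agree to within $2\varepsilon/3$ on extreme points: indeed on $X$ one has $|\tau(a)-f(\tau)|\le|\tau(a)-\tau(b)|+|\tau(b)-f(\tau)|$, and $|\tau(a)-\tau(b)|=|\tau(g(b)-b)|\le\varepsilon/3$ once we also arrange, by a further application of the density statement or simply by functional calculus on $b$, that the \emph{spectrum} of $b$ (not merely its tracial range) lies in $(-\varepsilon/3,1+\varepsilon/3)$; this can be guaranteed from the start by replacing $b$ with $h(b)$ for a suitable truncation $h$, since squeezing the spectrum only decreases $\sup_\tau|\tau(h(b))-\tau(b)|$...

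The main obstacle, then, is precisely this last calibration: passing from an approximation that is good \emph{in trace-norm uniformly over $T(A)$} to one given by a \emph{positive contraction} $a\in A$, without losing control on $X$. I expect the cleanest route is: (1) get self-adjoint $b\in A$ with $\widehat b$ within $\varepsilon/2$ of $\widehat f$ uniformly on $T(A)$ (Edwards extension + Cuntz--Pedersen--Lin density); (2) note $0\le\widehat f\le 1$ forces $\widehat b(\tau)\in[-\varepsilon/2,1+\varepsilon/2]$ for \emph{every} $\tau\in T(A)$, hence $\spec(b)\subseteq[-\varepsilon/2,1+\varepsilon/2]$ because for a self-adjoint element the spectrum equals the closure of its numerical range against states, and every state of a unital C$^*$-algebra with tracial states... — or, to sidestep this delicate point about states versus traces, simply replace $b$ by $g(b)$ \emph{directly} and estimate $\tau(g(b))$ for $\tau\in X$ using that $g$ is $1$-Lipschitz and $g(t)=t$ on $[0,1]$, giving $|\tau(g(b))-f(\tau)|\le|\tau(g(b))-\tau(g(\widetilde b))|+\cdots$ where $\widetilde b$ is the (abstract) self-adjoint element of $C(T(A))^{**}$... . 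In the write-up I would present the argument in the order: scaling reduction; Edwards extension of $f$ to $\Aff(T(A))$; Cuntz--Pedersen/Lin norm-density of $\widehat{A_{\mathrm{sa}}}$ in $\Aff(T(A))$; functional-calculus truncation to land in the positive contractions with the $X$-estimate preserved up to $\varepsilon$; and finally a diagonal argument over a countable dense set of $f$'s to upgrade ``for each $\varepsilon$'' to the sequential statement, yielding $a_n$ with $\|a_n\|\le\|f\|$ and $\max_{\tau\in X}|\tau(a_n)-f(\tau)|\to 0$.
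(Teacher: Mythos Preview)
Your overall architecture is the same as the paper's --- Edwards' theorem to extend $f$ from $\deltae(T(A))$ to $\Aff(T(A))$, then Cuntz--Pedersen to realize the affine function by an element of $A$ --- and your instinct that the ``calibration'' step is the crux is correct. But the approach you propose for that step, functional-calculus truncation $a=g(b)$ with $g(t)=\max\{\min\{t,1\},0\}$, genuinely fails, and your own hedging (``the spectrum of $b$ may be larger than its range of traces'', the trailing ellipses) reflects this. Knowing $\tau(b)\in[-\varepsilon/3,\,1+\varepsilon/3]$ for all \emph{traces} $\tau$ says nothing about $\spec(b)$: even with the norm bound $\|b\|\le 1+\varepsilon$ that the isometric Edwards extension affords, $b$ could look like $\mathrm{diag}(1,-1)$ in some matrix subalgebra, with $\tau(b)=0$ but $\tau(g(b))=\tau(b_+)=1/2$, arbitrarily far from $f(\tau)=0$. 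None of the patches you sketch (pre-truncating $b$, invoking states rather than traces, the $C(T(A))^{**}$ aside) closes this; the last two trail off precisely because there is no elementary fix along those lines.

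The paper's solution avoids truncation entirely. One extends $f+\varepsilon$ rather than $f$; the isometry of the restriction map gives $\|F_{f,\varepsilon}\|=\|f\|+\varepsilon$, and since $T(A)\cong(A/A_0)^*$ by Cuntz--Pedersen, $F_{f,\varepsilon}$ is \emph{exactly} $\widehat{a'}$ for some $a'\in A$ with $\|a'\|=\|f\|+\varepsilon$ --- no density approximation is needed. The $\varepsilon$-shift forces $\tau(a')>0$ for every $\tau\in T(A)$, and then one invokes \cite[Corollary~6.4]{CP}: a self-adjoint element that is strictly positive under every trace is Cuntz--Pedersen equivalent to a genuinely positive element $a_{f,\varepsilon}\in A_+$ with the same tracial values. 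Finally \cite[Theorem~2.9]{CP} (cf.\ the proof of \cite[Theorem~9.3]{Lin}) yields $\|a_{f,\varepsilon}\|<\|f\|+2\varepsilon$; letting $\varepsilon\to 0$ along a sequence finishes. The combination of the $\varepsilon$-shift with \cite[Corollary~6.4]{CP} is exactly the idea your argument is missing.
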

\begin{proof}
First, in the study of Choquet simplex it turns out that if a Choquet simplex $K$ has compact extreme boundary $\deltae(K)$ then the natural restriction map from the continuous affine functions $\Aff(K)$ to $C(\deltae (K))$ is isometric isomorphism  \cite{Ed}, see also \cite[Corollary 11.21]{Goo}. Then for a given positive continuous function $f$ and $\varepsilon >0$, there exists a continuous affine function $F_{f,\varepsilon} \in \Aff(T(A))$ such that $F_{f,\varepsilon}|_{\deltae(T(A))}=f +\varepsilon$.

In \cite{CP}, we have seen that $T(A)\cong (A/A_0)^*$, where $A_0$ was defined as the linear space generated by $\{a^*a- aa^* : a\in A\}$.  Since $F_{f,\varepsilon}$ is a w$*$-continuous function of $(A/A_0)^{**}$ it follows that $F_{f,\varepsilon}\in A/A_0$. Then there exists a representative $a_{f,\varepsilon}'\in A$ of $F_{f,\varepsilon}$ such that $\| a_{f,\varepsilon}'\| =\|F_{f,\varepsilon}\|=\|f\|+\varepsilon$. By $\tau(a_{f,\varepsilon}')>0$ for any $\tau \in T(A)$ and \cite[Corollary 6.4]{CP}, we obtain $a_{f,\varepsilon}\in A_+$ such that $\tau(a_{f, \varepsilon})=\tau (a_{f, \varepsilon}')=f(\tau)+\varepsilon$ for any $\tau\in T(A)$. From \cite[Theorem 2.9]{CP}, we may obtain the condition $\|a_{f, \varepsilon}\| < \|f\|+2\varepsilon$, the same argument appears in the proof of \cite[Theorem 9.3]{Lin}. By taking a decreasing sequence $\varepsilon_n >0$, $n\in\N$ which converges to 0 and taking a small perturbation of $a_{f, \varepsilon_n}$ we obtain $a_n \in A_+$, $n\in\N$ satisfying the desired conditions.   
\end{proof}

In the following lemma, (i) is a variant of \cite[Lemma 4.6]{MS1} and (ii) is a version of \cite[Lemma 3.2]{MS2} for the uniform topology on $\deltae(T(A))$.
\begin{lemma}\label{TechLem}
Let $A$ be a unital separable simple infinite-dimensional C$^*$-algebra. Suppose that $\deltae (T(A))$ is compact. Then the following hold:
\begin{enumerate}
\item For any central sequence $(f_n)_n\in \Acentral $ and $a\in A$, it follows that
\[ \lim_{n\to\infty} \max_{\tau\in\deltae (T(A))} |\tau (f_na)-\tau(f_n)\tau(a)| =0.\]
\item Moreover, if $A$ is nuclear, for mutually orthogonal positive functions $f_i \in C(\deltae (T(A)))$, $i=1,2,...,N$ there exist central sequences $(a_{i,n})_n$, $i=1,2,...,N$ of positive elements in $A$ such that 
\[\lim_{n\to\infty} \max_{\tau\in \deltae(T(A))} |\tau(a_{i,n})-f_i (\tau)| =0\quad {\rm for\ }i=1,2,...,N,\quad {\rm and\ } \lim_{n\to\infty} \|a_{i,n}a_{j,n}\| =0  {\rm \ for\ } i\neq j. \]
\end{enumerate}
\end{lemma}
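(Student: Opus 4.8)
The plan is to prove the two statements in turn, leveraging Proposition~\ref{PropDeltae} for the existence of elements with prescribed traces and Proposition~\ref{PropCentSeq} (via Corollary~\ref{CorCentSeq}) to promote such elements to central sequences.

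\textbf{Proof of (i).} Fix a central sequence $(f_n)_n \in \Acentral$ and $a \in A$; without loss of generality $a = a^*$ and $\|a\|,\|f_n\| \le 1$. The point is that on each \emph{extreme} trace $\tau \in \deltae(T(A))$ the GNS representation $\piomega$ generates a factor $\pi_\tau(A)''$, and a central sequence, being asymptotically central for $A$, maps into the center of that factor in the appropriate ultrapower sense; hence $\tau(f_n a) - \tau(f_n)\tau(a) \to 0$ for each fixed $\tau$. The real content is making this \emph{uniform} over $\tau \in \deltae(T(A))$. First I would reduce, via a standard convexity/affineness argument and the compactness of $\deltae(T(A))$, to a statement about the single von Neumann algebra $\pi(A)''$ where $\pi = \bigoplus_{\tau \in \deltae(T(A))} \pi_\tau$, or better work with $A^{**}$ directly and the fact that $\deltae(T(A))$ compact makes $Z(A^{**})$ restricted to the relevant summand manageable. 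Concretely: the function $\tau \mapsto \tau(f_n a) - \tau(f_n)\tau(a)$ on $\deltae(T(A))$ extends (the first and third terms) to continuous functions on $T(A)$; the obstruction is the product $\tau(f_n)\tau(a)$, which is affine in neither variable on all of $T(A)$ but \emph{is} continuous on the compact set $\deltae(T(A))$ by Edwards' theorem (as used in Proposition~\ref{PropDeltae}). So the estimate will follow from: for each fixed $a$, $\sup_{\tau \in \deltae(T(A))}|\tau(f_n a) - \tau(f_n)\tau(a)| \to 0$, which one gets by a Cauchy--Schwarz estimate $|\tau(f_n a) - \tau(f_n)\tau(a)| \le \|f_n - \tau(f_n)1\|_\tau \cdot \|a\|_\tau$ combined with the fact that centrality forces $\inf_{c \in \C}\|f_n - c\|_\tau \to 0$ uniformly in $\tau$ over the extreme boundary — this last uniformity is where compactness of $\deltae(T(A))$ is essential and is the main obstacle in part (i). I would establish it by a compactness/subsequence argument: if it failed there would be $\tau_k \in \deltae(T(A))$ and $n_k \to \infty$ with $\|f_{n_k} - c\|_{\tau_k}$ bounded below for all $c$; passing to a convergent subnet $\tau_k \to \tau_\infty$ and using that $\deltae(T(A))$ is closed so $\tau_\infty$ is extreme, together with the centrality of $(f_n)_n$ in the ultrapower, yields a contradiction in the factor $\pi_{\tau_\infty}(A)''$.

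\textbf{Proof of (ii).} Given mutually orthogonal positive $f_i \in C(\deltae(T(A)))$, $i=1,\dots,N$, apply Proposition~\ref{PropDeltae} to each $f_i$ to get positive $b_{i,n} \in A$ with $\|b_{i,n}\| \le \|f_i\|$ and $\max_{\tau \in \deltae(T(A))}|\tau(b_{i,n}) - f_i(\tau)| \to 0$. These need not be central and need not be asymptotically orthogonal, so two corrections are needed. For centrality, apply Corollary~\ref{CorCentSeq} (which preserves all tracial values and the norm bound) — but note Corollary~\ref{CorCentSeq} as stated produces, for a \emph{fixed} $a$, a central sequence with the same traces; here I would instead run the proof of Proposition~\ref{PropCentSeq} simultaneously for the finite set $\{b_{1,n},\dots,b_{N,n}\}$ with a diagonal choice of $\varepsilon_n \to 0$ and finite sets $F_n$ exhausting $A$, obtaining $a_{i,n} = \frac{1}{N_n}\sum_j \Ad u_{n,j}(b_{i,n})$; these are positive, norm-bounded by $\|f_i\|$, have $\tau(a_{i,n}) = \tau(b_{i,n})$ for all $\tau \in T(A)$ (averaging by unitary conjugation preserves traces), and $(a_{i,n})_n \in \Acentral$. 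For asymptotic orthogonality: since $f_i f_j = 0$ for $i \ne j$ we have $\tau(b_{i,n}b_{j,n}) \to 0$ hence $\tau(a_{i,n}a_{j,n})$ — careful, conjugation doesn't commute with products — I would instead argue that $\max_{\tau \in \deltae(T(A))}\tau(a_{i,n}a_{j,n}) \to 0$ using part (i): $\tau(a_{i,n}a_{j,n}) \approx \tau(a_{i,n})\tau(a_{j,n}) \to f_i(\tau)f_j(\tau) = 0$, with the approximation uniform over $\deltae(T(A))$ by part~(i) applied to the central sequence $(a_{i,n})_n$ and (a further diagonalization over) the elements $a_{j,n}$; one must be slightly careful since in part (i) the element $a$ is fixed while here $a_{j,n}$ varies, but since $\|a_{j,n}\|$ is uniformly bounded the estimate in (i) can be made to hold with $a$ replaced by a bounded sequence by the same Cauchy--Schwarz bound $|\tau(a_{i,n}a_{j,n}) - \tau(a_{i,n})\tau(a_{j,n})| \le \|a_{i,n} - \tau(a_{i,n})1\|_\tau \|a_{j,n}\|$. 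This gives $\max_{\tau \in \deltae(T(A))}\tau((a_{i,n}a_{j,n})^*(a_{i,n}a_{j,n}))^{1/2} \to 0$. Finally, to upgrade from vanishing on the \emph{extreme} boundary to vanishing in operator norm $\|a_{i,n}a_{j,n}\| \to 0$: here I would use that $a_{i,n}, a_{j,n}$ are positive and almost commute (being central and, after a standard functional-calculus truncation replacing $a_{i,n}$ by $g_\delta(a_{i,n})$ supported where $f_i$ is large, essentially orthogonal in trace), so $a_{i,n}a_{j,n}$ is close to a positive element and its norm is controlled by its trace against extreme traces via simplicity and a spectral argument — this norm upgrade is the main obstacle in part (ii), and I expect to handle it by the trick, as in \cite{MS2}, of replacing $a_{i,n}$ by $h(a_{i,n})$ for a suitable $h$ vanishing near $0$, observing $\sum_i$ of the truncations is still trace-close to $\sum_i f_i \le 1$ hence a suitable perturbation (using simplicity to find the needed orthogonalizing elements, or directly invoking that a positive central sequence whose square-traces vanish uniformly on the compact extreme boundary is itself norm-small up to $c_{{\rm t}0}$ — which is false in general, so the truncation is genuinely needed and the orthogonalization must be done at the level of mutually orthogonal functions $f_i$ by shrinking their supports slightly first so that the $b_{i,n}$ can be taken with disjoint ``essential supports'' in a spectral sense).

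In summary, part (i) rests on a uniform (over the compact extreme boundary) version of the defining property of central sequences in tracial von Neumann algebras, proved by a subnet/compactness argument in the GNS factors; part (ii) combines Proposition~\ref{PropDeltae}, the averaging construction of Proposition~\ref{PropCentSeq}/Corollary~\ref{CorCentSeq} to achieve centrality while preserving traces, part (i) to get trace-orthogonality uniformly on $\deltae(T(A))$, and a functional-calculus truncation (with a preliminary slight shrinking of the supports of the $f_i$) to pass from trace-orthogonality to norm-orthogonality. The hardest technical point throughout is the uniformity in $\tau$, which is exactly where the hypothesis that $\deltae(T(A))$ is compact enters decisively.
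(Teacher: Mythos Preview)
Your approach to part~(i) has a genuine gap. The Cauchy--Schwarz bound $|\tau(f_na)-\tau(f_n)\tau(a)|\le\|f_n-\tau(f_n)1\|_\tau\|a\|_\tau$ is correct, but the claim that centrality forces $\inf_c\|f_n-c\|_\tau\to0$ (even for a single extremal $\tau$, let alone uniformly) is \emph{false}. The GNS factor $\pi_\tau(A)''$ is a II$_1$ factor, and II$_1$ factors such as $R$ have many nontrivial central sequences. Concretely, in the CAR algebra take $p_n=1\otimes\cdots\otimes e_{11}\otimes1\otimes\cdots$ with $e_{11}$ in the $n$-th tensor factor: this is a norm-central sequence with $\tau(p_n)=1/2$ and $\|p_n-1/2\|_\tau=1/2$ for all $n$. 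Your subnet/contradiction argument therefore cannot produce any contradiction, because there is none to be had in $\pi_{\tau_\infty}(A)''$. The same flaw recurs in your treatment of the varying $a_{j,n}$ in part~(ii), where you again invoke $\|a_{i,n}-\tau(a_{i,n})1\|_\tau$.

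The paper's proof of (i) works with $a$, not with $f_n$: using Proposition~\ref{PropDeltae} one builds, for each $\tau_0\in\deltae(T(A))$, positive contractions $a_0,\dots,a_N\in A$ whose trace-functions approximate a partition of unity $\{f_i\}$ on $\deltae(T(A))$ with $f_0\equiv1$ near $\tau_0$ and all supports of small diameter. Then $a'=\sum_i\sigma_i(a)a_i$ (with $\sigma_i\in\supp f_i$) satisfies $\max_{\tau\in T(A)}|\tau(a-a')|<\varepsilon$, and Cuntz--Pedersen gives $u_j$ with $a\approx\sum u_j^*u_j$, $a'\approx\sum u_ju_j^*$; now centrality of $(f_n)$ lets one pass from $\tau(f_na)$ to $\tau(f_na')=\sum_i\sigma_i(a)\tau(f_na_i)$. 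The crucial Cauchy--Schwarz step is applied to $a_i-\delta_{i,0}$ (which \emph{is} small in $\|\cdot\|_\tau$ for $\tau$ near $\tau_0$, since $\tau(a_i)\approx f_i(\tau)=\delta_{i,0}$ there), not to $f_n-\tau(f_n)$. A finite cover by such neighbourhoods finishes the argument.

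For part~(ii) your outline (Proposition~\ref{PropDeltae}, then Corollary~\ref{CorCentSeq}, then part~(i), then functional calculus) matches the paper's, but two points need sharpening. First, the passage from $\tau(b_{i,n}b_{j,n})\to0$ with both indices moving is done in the paper by applying (i) with the second argument \emph{fixed} and taking inductive subsequences---not via the flawed Cauchy--Schwarz bound. Second, the upgrade to norm-orthogonality is carried out by a specific construction you should internalise: with $r_{i,n}=b_{i,n}^{1/2}\bigl(\sum_{j\ne i}b_{j,n}\bigr)b_{i,n}^{1/2}$ and $g_m(t)=\min\{1,mt\}$, set $a_{i,n,m}=b_{i,n}^{1/2}(1-g_m(r_{i,n}))b_{i,n}^{1/2}$; then $\|a_{i,n,m}a_{j,n,m}\|^2\le C\|(1-g_m(r_{i,n}))r_{i,n}\|<C/m$, while $\tau(b_{i,n}-a_{i,n,m})$ is controlled by $\tau(g_m(r_{i,n}))$, which is small because $\tau(r_{i,n}^k)\to0$. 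Your ``shrink the supports of the $f_i$ first'' idea is not needed and would not by itself produce norm-orthogonality.
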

\begin{proof}[Proof of (i)]
Since $A$ is separable, $\deltae (T(A))$ is a compact metric space. In what follows we denote by $d$ a metric of $\deltae (T(A))$ and let $B(\tau, \varepsilon)=\{ \sigma\in \deltae (T(A)): d(\sigma, \tau)\leq\varepsilon\}$ for $\tau\in \deltae (T(A))$ and $\varepsilon >0$. To show (i), without loss of generality, we may assume that $\|a\|\leq 1$, $\|f_n\|\leq 1$ for $n\in\N$, and the Lipschitz constant of $\deltae (T(A))\ni \tau \mapsto \tau(a)$ is less than one.

Since $\deltae (T(A))$ is a compact metric space, for $\varepsilon >0$ and $\tau_0\in \deltae (T(A))$ there exists a positive contraction $f_0\in C(\deltae (T(A)))$ such that $f_0|_{B(\tau_0, \varepsilon/16)}=1$ and the diameter of $\supp (f)$ is less than $\varepsilon /4$, and there exists a partition of unity $\{f_i'\}_{i=1}^N\subset C(\deltae (T(A)))$ such that the diameter of $\supp (f_i')$ are less than $\varepsilon /4$. Set $f_i = f_i'(1-f_0)$, $i=1,2,...,N$. Thus we see that $\{f_i\}_{i=0}^N$ is also a partition of unity. By changing subscript $i=1,2,...,N$, if necessary, we may assume that $f_i \neq 0$ for all $i=1,2,...,N$. Set $\sigma_0=\tau_0$ and $\sigma_i\in f_i^{-1} ((0,1])$ for each $i=1,2,...,N$.

By Proposition \ref{PropDeltae}, there exist sequences $a_{i,n}\in A$, $n\in\N$, $i=0,1,...,N$ of positive elements such that $\max_{\tau\in\deltae (T(A))}|\tau (a_{i,n}) -f_i(\tau)|\to 0$ as $n\to\infty$ and $\|a_{i,n}\|\leq 1$. Then, by taking a large $n\in \N$ we obtain positive contractions $a_i\in A$, $i=0,1,...,N$ such that 
\begin{align*}
&\max_{\tau\in \deltae (T(A))} |\tau (a_i) - f_i(\tau)| < \varepsilon/(8(N+1))\quad {\rm for\ }i=0,1,...,N, \\ 
&\max_{\tau\in\deltae (T(A))} |\tau (a - \sum_{i=0}^N \sigma_i(a)a_i) |\leq \max_{\tau} \sum_i|\tau(a)- \sigma_i (a)|f_i(\tau) +|\sigma_i(a)|\cdot|f_i(\tau) -\tau(a_i)| \\
&\leq \Lip (a)\varepsilon/4 < \varepsilon/4. 
\end{align*}
Set a positive element $\displaystyle a'=\sum_{i=0}^N \sigma_i(a) a_i \in A$. From the Krein-Milman theorem it follows that 
\[ \max_{\tau\in T(A)} |\tau (a-a')| < \varepsilon/4.\]
Because of Cuntz-Pedersen's theorem \cite{CP} (see also the proof of \cite[Lemma 4.6]{MS1}) we can obtain $u_j \in A$, $j=1,2,...,M$ such that
\[ \|a- \sum_{j=1}^M u_j^*u_j\|<\varepsilon/4, \quad \|a' -\sum_{j=1}^M u_ju_j^* \|< \varepsilon/4.\]
Since $(f_n)_n$ is a central sequence there exists $n_{\tau_0}\in \N$ such that for $n\geq n_{\tau_0}$ and $\sigma\in T(A)$,
\begin{align*}
\sigma(af_n)&\approx_{\varepsilon/4} \sigma(\sum_{j=1}^M u_j^*u_j f_n) \approx_{\varepsilon/8} \sigma (\sum_j u_ju_j^* f_n)\approx_{\varepsilon/4} \sigma (a'f_n)\\ 
&=\sum_{i=0}^N \sigma_i(a)\sigma(a_i f_n). 
\end{align*}
By the definition of $f_0$ we have $f_i (\sigma) =\delta_{i,0}$ for $\sigma\in B(\tau_0, \varepsilon/16)$ and $i=0,1,...,N$, then for $\sigma\in B(\tau_0,\varepsilon/16)$ and $n\in\N$ it follows that 
\begin{align*}
&|\sigma(a_if_n)-\delta_{i,0}\sigma(f_n)| = |\sigma((a_i-\delta_{i,0})f_n)| \\
&=|\sigma((-1)^{\delta_{i,0}}(a_i-\delta_{i,0})f_n)| \leq | \sigma(a_i-\delta_{i,0})|\cdot\|f_n\| \leq \varepsilon/(8(N+1)).
\end{align*}
 From this, for $n\geq n_{\tau_0}$ and $\sigma\in B(\tau_0, \varepsilon/16)$ we have
\[ |\sigma(af_n) -\sigma(a)\sigma(f_n) | 
\leq |\sigma (af_n) -\tau_0(a)\sigma(f_n) | + |\tau_0(a) -\sigma (a)|< 3\varepsilon/4 + \varepsilon/16 < \varepsilon. \]

By the compactness of $\deltae (T(A))$, there exist $\tau_1,\tau_2,...,\tau_L\in\deltae (T(A))$ such taht $\displaystyle \bigcup_{i=1}^L B(\tau_i, \varepsilon/16) \supset \deltae (T(A))$. Taking $n_{\tau_i}\in \N$, $i=1,2,...,L$ like $n_{\tau_0}$ in the above argument, we define $\displaystyle m=\max_{i=1,2,...,L} n_{\tau_i}$. Then if $n\geq m$ we conclude that 
\[\max_{\tau\in \deltae (T(A))}|\tau(af_n) -\tau (a) \tau(f_n) | < \varepsilon.\]\end{proof}
\begin{proof}[Proof of (ii).]
By Proposition \ref{PropDeltae}, we obtain sequences of positive elements $b_{i,n}'\in A$, $i=1,2,...,N$, $n\in\N$ such taht $\|b_{i,n}'\| \leq \|f_i\|$ and $\displaystyle \lim_{n\to\infty}\max_{\tau\in \deltae(T(A))}| \tau (b_{i,n}') - f_i(\tau)| =0$. 
By Corollary \ref{CorCentSeq} there exist central sequences $b_{i,n,m}'$, $m\in\N$ of  positive  elements in $A$ such that $\| b_{i,n,m}\| \leq \| b_{i,n}'\|$ and $\tau (b_{i,n,m}) =\tau (b_{i,n}')$ for $i=1,2,...,N$, $n, m\in\N$, and $\tau\in T(A)$. Since $A$ is separable, we can take a subsequence $m_n\in\N$, $n\in\N$ such that $(b_{i,n,m_n})_n\in \Acentral$ for $i=1,2,...,N$. Let $b_{i,n}= b_{i,n,m_n}$, $i=1,2,...,N$, $n\in\N$. Now we have sequences of positive elements $b_{i,n}\in A$, such that $\|b_{i,n}\|\leq \|f_i\|$, 
\[(b_{i,n})_n\in\Acentral,\quad {\rm and\ } \lim_{n\to \infty}\max_{\tau\in \deltae (T(A))}|\tau (b_{i,n}) -f_i(\tau)| =0.\]

By (i) and taking subsequences of $(b_{i,n})_n$ inductively we may assume that 
\[ \lim_{n\to\infty}\max_{\tau\in\deltae(T(A))}|\tau(b_{i,n}b_{j,n}) -\tau(b_{i,n})\tau(b_{j,n})|=0\quad {\rm for\ }i\neq j .\]
Then it follows that 
\begin{align*}
&\lim_{n\to\infty} \max_{\tau\in \deltae(T(A))}| \tau(b_{i,n}b_{j,n})| \\
&=\lim_n \max_{\tau} |\tau(b_{i,n}b_{j,n})-\tau(b_{i,n})\tau(b_{j,n})|+|\tau(b_{i,n})\tau(b_{j,n})-f_i(\tau)f_j(\tau)|=0 \quad {\rm for\ }i\neq j. 
\end{align*}

In the following argument, we will provide the orthogonality of $\{(b_{i,n})_n\}_{i=1}^N$ in the operator norm sense. Let $g_m(t)= \min\{ 1, mt\}$ for $t\in[0, \infty)$ and $m\in\N$. Define central sequences $(r_{i,n})_n$ and $(a_{i,n,m})_m$ by
\begin{align*}
r_{i,n} &= b_{i,n}^{1/2} (\sum_{i\neq j} b_{j,n} )b_{i,n}^{1/2}, \\
a_{i,n,m} &= b_{i,n}^{1/2} (1- g_m (r_{i,n}))b_{i,n}^{1/2}. 
\end{align*}
Note that $a_{i,n,m} \leq b_{i,n}$ for any $i=1,2,...,N$, $n, m\in\N$. Since $\displaystyle \max_{\tau\in \deltae (T(A))} \tau (b_{i,n}b_{j,n}) \to 0$ as $n\to \infty$ for $i\neq j$, it follows that for any $k\in\N$ 
\[ \max_{\tau\in\deltae(T(A))} \tau(r_{i,n}^k) \leq (\sum_{i\neq j} \|f_i\|\cdot\|f_j\|)^{k-1} \cdot \max_{\tau} \tau(\sum_{i\neq j} b_{i,n}b_{j,n}) \to 0,\quad n\to \infty.\]
Then for any $m\in\N$
\[\max_{\tau\in\deltae(T(A))} \tau (b_{i,n} - a_{i,n,m}) \leq \|f_i\|\cdot \max_{\tau} \tau(g_m(r_{i,n}))\to 0, \quad n\to\infty. \]
Furthermore, for $i=1,2,...,N$, $i\neq j$, $n\in\N$ we have
\begin{align*}
\| a_{i,n,m}a_{j,n,m} \|^2 &\leq \|f_j\| \cdot \| a_{i,n,m} a_{j,n,m} a_{i,n,m}\|\leq \|f_j\| \cdot \| a_{i,n,m}( \sum_{i\neq j} b_{j,n}) a_{i,n,m}\| \\
&\leq \|f_i\|\cdot\|f_j\| \cdot\|(1-g_m(r_{i,n})) r_{i,n}\| < \|f_i\|\cdot\|f_j\|/m. 
\end{align*}
Since $(a_{i,n,m})_n\in \Acentral$ for $m\in\N$, we can find an increasing sequence $(m_n)_n$ of natural numbers such that $m_n \to \infty$ as $n\to\infty$, $\displaystyle \max_{\tau\in \deltae(T(A))} \tau (b_{i,n} - a_{i,n,m_n})\to 0$ as $n\to\infty$, and $(a_{i,n, m_n})_n\in \Acentral$. Consequently $a_{i,n}= a_{i,n,m_n}$, $n\in\N$ satisfy the desired conditions.
\end{proof}

\section{Proof of the main theorem}\label{Sec5}
In this section, we give a proof of Theorem \ref{MainThm}. The following lemma and its proof is inspired by techniques for $C(X)$-algebras from \cite[Theorem 4.6]{HRW} and \cite[Theorem 0.1]{DW}. Recently, these techniques were developed by A. S. Toms and W. Winter to show $\Zjs$-absorption of the crossed product C$^*$-algebras by minimal homeomorphisms on compact finite-dimensional spaces \cite{TW}. Their proof as well as ours relies heavily on the condition of finite covering dimenion. It might be interesting that in this lemma the number of completely positive maps corresponds to the covering dimension of $\deltae (T(A))$. 
\begin{proposition}\label{MainLem}
Let $A$ be a unital separable simple nuclear C$^*$-algebra. Suppose that $\deltae (T(A))$ is compact and $d=\dim (\deltae (T(A))) <\infty$. Then for any $k\in\N$ there exist order zero completely positive maps $\varphi_l :M_k \rightarrow \Atcentral$, $l=0,1,...,d$ such that 
\[\sum_{l=0}^d \varphi_l (1_k) =1\quad {\rm and\ }\quad [\varphi_l (a), \varphi_m (b) ] = 0\quad {\rm for\ } l\neq m,\quad a, b\in M_k.\]
\end{proposition}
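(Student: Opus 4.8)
The plan is to prove this by manufacturing the $\varphi_l$ as classes of sequences of completely positive contractions $\psi_l^{(n)}\colon M_k\to A$, $l=0,1,\dots,d$, and to push all the required relations into the $\|\cdot\|_2$-quotient. Concretely, it suffices to produce, for each finite $F\subset A$ and $\delta>0$, such maps with $\|[\psi_l^{(n)}(x),b]\|_2$, with $\|\psi_l^{(n)}(x)\psi_l^{(n)}(y)\|_2$ (for $x,y\ge 0$, $xy=0$), with $\|[\psi_l^{(n)}(x),\psi_m^{(n)}(y)]\|_2$ (for $l\ne m$) and with $\|\sum_l\psi_l^{(n)}(1_k)-1\|_2$ all eventually $<\delta$ for $b\in F$ and $x,y$ in the unit ball of $M_k$; a diagonal argument over an exhausting sequence of finite sets and $\delta\downarrow 0$ then yields, in the quotient $\Atcentral=\Atinfty\cap A'$, genuine order zero completely positive maps $\varphi_l$ with $\sum_l\varphi_l(1_k)=1$ and $[\varphi_l(a),\varphi_m(b)]=0$ for $l\ne m$. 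Note that $\varphi_l$ is order zero as soon as $\|\psi_l^{(n)}(x)\psi_l^{(n)}(y)\|_2\to 0$ whenever $xy=0$ in $(M_k)_+$, so the $\psi_l^{(n)}$ themselves need \emph{not} be order zero in norm.

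The local input is the structure of $A$ at a single extreme trace. For $\tau\in X:=\deltae(T(A))$ the algebra $\pi_\tau(A)''$ is a factor (extremality of $\tau$), is finite (it carries the finite normal trace induced by $\tau$) and is not of type $\mathrm I_n$ (simplicity makes $\pi_\tau$ faithful, so a type $\mathrm I_n$ factor would embed the infinite-dimensional $A$ into $M_n$); being injective by nuclearity of $A$ it is therefore the hyperfinite $\mathrm{II}_1$ factor $R$. Since $R\cong R\,\bar\otimes\,R$, for any finite $F\subset A$ and $\delta>0$ there is a unital embedding $M_k\hookrightarrow\pi_\tau(A)''$ whose range $\delta$-commutes with $\pi_\tau(F)$ in $\|\cdot\|_\tau$; pulling it back through Kaplansky density produces a completely positive contraction $\Psi^\tau\colon M_k\to A$ that is $\delta$-close in $\|\cdot\|_\tau$ to a unital $\ast$-homomorphism commuting with $F$. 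As $\sigma\mapsto\|a\|_\sigma$ is continuous on $T(A)$ and $X$ is compact, the same estimates (with $2\delta$, say) hold in $\|\cdot\|_\sigma$ for all $\sigma$ in some open neighbourhood $U_\tau\ni\tau$.

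Now one brings in the dimension. Since $X$ is a compact metric space of covering dimension $d$, the cover $\{U_\tau\}$ admits a finite open refinement $\{W_t\}_{t\in I}$ together with a colouring $c\colon I\to\{0,\dots,d\}$ such that same-coloured $W_t$'s are disjoint, and a partition of unity $\{g_t\}$ with $\supp g_t\subset W_t$. Running through the colours $l=0,\dots,d$ in turn, one chooses for each $t$ with $c(t)=l$ a map $\Psi^{(t)}\colon M_k\to A$ as above, valid on $W_t$, with the extra requirement that its range $\|\cdot\|_\sigma$-commute, for $\sigma\in W_t$, with the ranges of all $\Psi^{(t')}$ already chosen with $c(t')<l$ and $W_{t'}\cap W_t\ne\emptyset$; this is possible because a point of $X$ lies in at most $d+1$ of the $W_t$ (one per colour), so at each point only finitely many commutation constraints are imposed, and $R$ contains in the relative commutant of any finite set arbitrarily many pairwise commuting unital copies of $M_k$. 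Next, within each colour the functions $\{g_t:c(t)=l\}$ are pairwise orthogonal, so Lemma \ref{TechLem}(ii) and Corollary \ref{CorCentSeq} furnish central sequences $(a_n^{(t)})_n$ of positive contractions with $\max_{\tau\in X}|\tau(a_n^{(t)})-g_t(\tau)|\to 0$ and $\|a_n^{(t)}a_n^{(t')}\|\to 0$ for $c(t)=c(t')$, $t\ne t'$; a recursive ``telescoping'' of this construction — modelled on the end conditions of $I(A_0,A_1)$ in Definition \ref{DefDelta}, which encode exactly a noncommutative partition of unity — additionally arranges that $\sum_{t\in I}a_n^{(t)}\to 1$ in $\|\cdot\|_2$. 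Finally one sets
\[ \psi_l^{(n)}(x)=\sum_{t\,:\,c(t)=l}\big(a_n^{(t)}\big)^{1/2}\,\Psi^{(t)}(x)\,\big(a_n^{(t)}\big)^{1/2},\qquad x\in M_k, \]
and checks the four estimates of the first paragraph using centrality of the $a_n^{(t)}$ (so they commute in norm with the fixed elements $\Psi^{(t)}(x)$), the operator-norm orthogonality within each colour, the fact that $\Psi^{(t)}$ is $\|\cdot\|_\sigma$-multiplicative on $W_t\supset\supp g_t$, the cross-colour commutation arranged above, and Lemma \ref{TechLem}(i) applied to the $a_n^{(t)}$ against the finitely many fixed elements built from the $\Psi^{(t)}$.

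I expect the main obstacle to be the coherent, simultaneous choice of the local $M_k$-systems in the third step: each $\Psi^{(t)}$ must remain $\|\cdot\|_\sigma$-central and $\|\cdot\|_\sigma$-multiplicative on its own cell $W_t$ while commuting, on every overlap $W_t\cap W_{t'}$, with the competing systems of other colours — and it is precisely the finiteness of $d=\dim X$ that bounds the number of these simultaneous commutation constraints and lets the hyperfinite local factors absorb them (were $\dim X$ infinite the argument would collapse). A second delicate point is forcing $\sum_{t}a_n^{(t)}$ to converge to $1$ in $\|\cdot\|_2$ while keeping the within-colour orthogonality intact, which is why the partition of unity has to be realized in the telescoped form rather than as a naive sum. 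Finally, since $\|\cdot\|_2$ is a supremum over all of $T(A)$, every estimate must be uniform in $\tau$; this is handled by the compactness of $X$ and by the passage between $C(X)$ and $\Aff(T(A))$ already used in Proposition \ref{PropDeltae} and Corollary \ref{CorCentSeq}.
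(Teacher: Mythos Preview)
Your overall strategy—localize at extreme traces via the hyperfinite $\mathrm{II}_1$ factor, take a $(d{+}1)$-coloured refinement, and glue with central sequences realizing a partition of unity—is close in spirit to the paper's, but the paper organizes the induction differently, and the difference is exactly where your argument breaks. The paper does \emph{not} attempt a one-shot colouring. It runs an induction on the dimension via boundaries: at stage $c$ one already has \emph{central sequences} $(\psi_{l,n})_n$, $l<c$, working on a closed set $B_0$ with $\dim B_0<c$; one then picks \emph{fixed} local maps $\varphi_{\tau_i}$ with no cross-commutation imposed, decomposes $B$ into a neighbourhood of $B_0=\bigcup_i\bd_B(V_i)$ (handled by the inductive $\psi_{l,n}$) and pairwise disjoint pieces $W_i'$ (where the $\varphi_{\tau_i}$ are separated by orthogonal central sequences), and sets $\varphi_{c,n}=\sum_i a_i^{1/2}\varphi_{\tau_i}(\,\cdot\,)a_i^{1/2}$, $\varphi_{l,n}=a_0^{1/2}\psi_{l,n}(\,\cdot\,)a_0^{1/2}$ for $l<c$. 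The cross-commutation $[\varphi_{l,n}(x),\varphi_{c,n}(y)]\to 0$ then follows \emph{in operator norm} simply because $(\psi_{l,n}(x))_n$ is a central sequence and each $\varphi_{\tau_i}(y)$ is a fixed element of $A$; no commutation among the local maps themselves is ever arranged.

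Your argument, by contrast, tries to force cross-colour commutation directly on the fixed local maps $\Psi^{(t)}$, and this step does not go through as written. You need, for each colour-$l$ cell $W_t$, a single $\Psi^{(t)}:M_k\to A$ that is $\|\cdot\|_\sigma$-multiplicative, $\|\cdot\|_\sigma$-central, and $\|\cdot\|_\sigma$-commutes with all previously chosen $\Psi^{(t')}(M_k)$, \emph{uniformly for every} $\sigma\in W_t$. The $\mathrm{II}_1$-factor/Kaplansky argument you invoke manufactures such a map at a single $\tau$ and, by continuity, on some open neighbourhood of $\tau$—but there is no reason that neighbourhood contains all of $W_t$, because the refinement $\{W_t\}$ was fixed \emph{before} you knew which extra elements $\Psi^{(t')}(M_k)$ had to be commuted with. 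Finite-dimensionality of $X$ bounds how many constraints meet at a point; it does not give you uniformity over a prescribed cell. If you shrink the cells to fit the new neighbourhoods you change the colouring and must redo the earlier colours, and you are in an infinite regress. The paper's boundary induction is precisely the device that breaks this circle: by promoting the earlier colours to central sequences and keeping only the newest colour as fixed local data, cross-commutation becomes automatic and never has to be engineered on a set of positive diameter.
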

\begin{proof}
Since $\deltae (T(A))$ is compact and metrizable, we can see that the covering dimension $d$ coincides with the inductive dimension of $\deltae (T(A))$ by \cite[CH.4, Theorem 5.4]{Pea}. This means that for any finite open covering $\{U_i\}_{i=1}^N$ of $\deltae (T(A))$ there exists another open covering $\{V_i\}_{i=1}^N$ such that $\bigcap_{j=1}^{d+2}V_{i_j} =\emptyset$ for $\{ i_j\}_{j=1}^{d+2}\subset \{1,2,...,N\}$, $\overline{V_i}\subset U_i$ for $i=1,2,...,N$, and $\dim (\bd  (V_i))\leq d-1$ for $i=1,2,...,N$, where we denote by $\bd(X)$ the (topological) boundary of a set $X$, \cite[CH.3, Proposition 1.6, and CH.4, Section 2]{Pea}. We shall show the statement by the standard induction for boundaries.

Let $c\in \Z_+$ be such that $c\leq d$. We assume that for any closed subset $B_0\subset \deltae (T(A))$ with $\dim (B_0) < c$ there exist completely positive maps $\psi_{l,n} : M_k \rightarrow A$, $l=0,1,...,c-1$, $n\in\N$ such that 
\begin{enumerate}
\item 
$\displaystyle \left(\sum_{l=0}^{c-1} \psi_{l,n} (1_k)\right)_n \leq 1_{\Acentral}$ \ in $\Acentral$,
\item 
$\displaystyle \lim_{n\to \infty} \lVert \psi_{l,n} (x)\psi_{l,n}(y)\rVert =0\quad {\rm for\ positive\ elements\ } x, y\in {M_k}\ {\rm with\ } x\cdot y=0, $
\item 
$\displaystyle \lim_{n\to \infty} \lVert [\psi_{l,n} (x), a ]\rVert =0\quad {\rm for\ }x\in M_k, a\in A,$
\item
$\displaystyle \lim_{n\to\infty} \lVert [\psi_{l,n} (x), \psi_{m,n}(y)]\rVert =0 \quad{\rm for\ } l\neq m,\ x, y\in M_k,\quad {\rm and } $
\item 
$\displaystyle \lim_{n\to\infty} \max_{\tau\in B_0} \left\lVert 1- \sum_{l=0}^{c-1} \psi_{l,n} (1_k) \right\rVert_{\tau} =0. $
\end{enumerate}  
Recall that both covering dimension and the inductive dimension of $\emptyset$ are defined as $-1$. So, if $d=0$ we regard $B_0=\emptyset$ and this assumption is automotically true. Let $B$ be a closed subset of $\deltae (T(A))$ with $\dim (B)=c$, $F$ a finite subset of contractions in $A$, and $\varepsilon >0$. In order to complete the induction, it suffices to show that: There exist completely positive maps $\varphi_{l,n}:M_k \rightarrow A$, $l=0,1,...,c$, $n\in\N$ such that 
\begin{enumerate}
\item[(i)'] 
$\displaystyle \left(\sum_{l=0}^{c} \varphi_{l,n} (1_k)\right)_n \leq 1_{\Acentral}$, 
\item[(ii)']
$\displaystyle \limsup_{n\to \infty} \lVert \varphi_{l,n} (x)\varphi_{l,n}(y) \rVert<\varepsilon\lVert x\rVert\cdot\lVert y\rVert \quad {\rm for\ positive\ elements\ } x, y\in {M_k}\ {\rm with\ } x\cdot y=0, $
\item[(iii)']
$\displaystyle \limsup_{n\to \infty} \rVert [\varphi_{l,n} (x), a ]\lVert <\varepsilon\lVert x\rVert\quad {\rm for\ }x\in M_k, a\in F,$
\item[(iv)']
$\displaystyle \lim_{n\to\infty} \lVert [\varphi_{l,n} (x), \varphi_{m,n}(y)]\rVert =0 \quad{\rm for\ } l\neq m,\ x, y\in M_k, {\rm and } $
\item[(v)']
$\displaystyle \limsup_{n\to\infty} \max_{\tau\in B} \lVert 1- \sum_{l=0}^{c} \varphi_{l,n} (1_k) \rVert_{\tau} <\varepsilon. $
\end{enumerate}  

Since $A$ is nuclear, for any $\tau\in \deltae (T(A))$ we see that the weak closure of $A$ defined by the GNS-representation associated with $\tau$ is the AFD type II$_1$ factor von Neumann algebra \cite{Con}. Then, by the same argument in the proof of \cite[Lemma 3.3]{MS2} we can obtain a sequence of completely positive contractions $\varphi_{\tau, n} :M_k \rightarrow A$, $n\in\N$ such that $(\varphi_{\tau, n} (a) )_n\in \Acentral $ for $a\in M_k$, $(\varphi_{\tau, n}(x)\varphi_{\tau, n}(y))_n =0$ in $\Acentral$ for positive elements $x, y\in M_k$ with
 $x\cdot y =0$, and $|\tau ( 1- \varphi_{\tau, n} (1_k))| \to 0$ as $n\to \infty$. Thus taking a large $n\in\N$ we obtain completely positive maps $\varphi_{\tau} : M_k \rightarrow A$, $\tau\in \deltae (T(A))$ such that
\begin{align*}
&\|\varphi_{\tau} (x)\varphi_{\tau}(y) \|< \varepsilon \|x\|\cdot\|y\|\quad{\rm for\ any\ positive\ elements\ } x, y\in M_k \ {\rm with\ } x\cdot y =0, \\
&\|[\varphi_{\tau} (x), a ] \| < \varepsilon \|x\|\quad {\rm for\ } x\in M_k, a\in F,\\
&\|1-\varphi_{\tau} (1_k) \|_{\tau} \leq \tau (1- \varphi_{\tau} (1_k))^2 < \varepsilon. 
\end{align*}
For any $\tau\in \deltae (T(A))$, set open subsets 
\[ U_{\tau}=\{\sigma\in \deltae (T(A)) : \|1-\varphi_{\tau} (1_k) \|_{\sigma} < \varepsilon \}. \]
Since $B\subset \deltae (T(A))$ is compact, there exist $\tau_1, \tau_2,...,\tau_N\in \deltae (T(A))$ such that $\displaystyle \bigcup_{i=1}^N U_{\tau_i} \supset B$. Because the inductive dimension of $B$ is equal to $\dim (B) =c$, there exist relatively open covering $V_1, V_2,...,V_N$ of $B$ such that $\overline{V_i} \subset U_i \cap B$, $\displaystyle \bigcap_{j=1}^{c+2} V_{i_j} =\emptyset$ for any $\{i_j\}_{j=1}^{c+2} \subset \{1,2,...,N\}$, and $\dim (\bd_B (V_i))\leq c-1$ for $i=1,2,...,N$. Set $\displaystyle B_0= \bigcup_{i=1}^N \bd_B(V_i)$. Note that $\dim (B_0) \leq c-1$.

By the assumption of the induction we have completely positive contractions $\psi_{l,n}:M_k\rightarrow A$, $l=0,1,...,c-1$, $n\in\N$ satisfying the conditions (i)$\sim$(v) for this $B_0$. Let $\varepsilon_n > 0$, $n\in\N$ be a decreasing sequence such that $\varepsilon_n \to 0$ and $\displaystyle \max_{\tau\in B_0} \left\lVert1-\sum_{l=0}^{c-1}\psi_{l,n}(1_k) \right\rVert_{\tau} < \varepsilon_n$ for $n\in\N$. Let $W_{0,n}'$, $n\in\N$ be relatively open subsets of $B$ such that $W_{0,n}'\supset B_0 $ and $\displaystyle \sup_{\tau\in W_{0,n}'} \left\lVert 1- \sum_{l=0}^{c-1} \psi_{i,n}(1_k)\right\rVert_{\tau} < \varepsilon_n$.  Define $\displaystyle W_i'= V_i\setminus \left(\bigcup_{j=1}^{i-1} \overline{V_j} \cup B_0\right)$, $i=1,2,...,N$ inductively, then it follows that $\displaystyle \bigcup_{i=1}^N W_i' =\bigcup_{i=1}^N V_i\setminus B_0$ and $W_i'\cap W_j'=\emptyset$ for $i\neq j$. Note that $\{ W_{0,n}'\}\cup \{W_i'\}_{i=1}^N $ is a relatively open covering of $B$ for any $n\in\N$. It is elementary to see that there exist open subsets $W_{0,n}$ and $W_i$ of $\deltae (T(A))$ such that 
\[ W_{0,n}\cap B = W_{0,n}',\quad W_i\cap B=W_i',\quad W_i\cap W_j =\emptyset\quad {\rm for\ } i\neq j,\quad {\rm and\ }\bigcup_{i=1}^N W_i\cup W_{0,n}\supset \deltae (T(A)).\] 
(Actually, in general normal space, it is well-known that every pair of separated F$_\sigma$ sets can be devided by disjoint open sets, (see \cite[Problem 2.7.2]{Engel} for example). Now $B\subset \deltae (T(A))$ is a metric space, then each $\{ W_i'\}_{i=1}^N$ is a F$_\sigma$ set in $\deltae (T(A))$ and mutually separated, i.e., $\overline{W_i'}\cap W_j =\emptyset$ for $i\neq j$. Thus there exist open sets $Y_i$ in $\deltae (T(A))$ such that $Y_i\cap Y_j=\emptyset$, $i\neq j$, and $Y_i \supset W_i'$. It is easy to see $W_i =Y_i\setminus (B\setminus W_i')$ are open subsets of $\deltae (T(A))$ such that $W_i \cap B= W_i'$.)
For any $n\in\N$ there exist a partition of unity $\{ f_{0,n}\} \cup\{f_{i,n}\}_{i=1}^N \subset C(\deltae (T(A)))$ such that $\supp (f_{0,n})\subset W_{0,n}$ and $\supp (f_{i,n})\subset W_i$.

Applying (ii) of Lemma \ref{TechLem} to $\{f_{i,n}\}_{i=1}^N$ for each $n\in\N$, we obtain central sequences $(a_{i,n,m})_m$, $i=1,2,...,N$ of positive contractions in $A$ such that 
\[ \lim_{m\to\infty} \max_{\tau\in \deltae (T(A))} |\tau(a_{i,n,m})- f_{i,n}(\tau)| =0,\quad \lim_{m\to\infty} \| a_{i,n,m}a_{j,n,m}\| =0\quad {\rm for\ } i\neq j,\ n\in\N.\] Let $a_{0,n,m}$, $n,m\in\N$ be sequences of positive contractions in $A$ such that 
\[ (a_{0,n,m})_m = \left(1- \sum_{i=1}^N a_{i,n,m}\right)_m\quad {\rm in\ } \Acentral\ {\rm for\ any\ } n\in\N .\]
Note that $([a_{0,n,m}, a_{i,n,m}])_m=0$ in $\Acentral$ for $i=1,2,...,N$. From $\displaystyle f_{0,n}= 1-\sum_{i=1}^N f_{i,n}$ it follows that
\[ \lim_{m\to \infty} \max_{\tau\in\deltae (T(A))} |\tau (a_{0,n,m}) - f_{0,n}(\tau)| =0.\]
Then, by the separability of $A$ and by (i) of Lemma \ref{TechLem}, we can take a subsequence $m_n\in \N$, $n\in\N$ such that 
\begin{enumerate}
\item[(a-i)] $\displaystyle (a_{i,n,m_n})_n \in \Acentral\ {\rm for\ }i=0,1,...,N$,
\item[(a-ii)] $\displaystyle (\sum_{i=0}^N a_{i,n,m_n})_n =1$, 
\item[(a-iii)] $\displaystyle \lim_{n\to \infty} \| a_{i,n,m_n} a_{j,n,m_n} \| =0$  for $i,j\in \{ 1,2,...,N\}$ with $i\neq j$,
\item[(a-iv)] $\displaystyle \lim_{n\to \infty} \|[ a_{0,n,m_n}, a_{i,n,m_n}]\| =0$ for $i=1,2,...,N$,  
\item[(a-v)] $\displaystyle \lim_{n\to \infty} \| [a_{i,n,m_n}, \psi_{j,n} (x) ]\| =0$ for $i,j=0,1,...,N$, $x\in M_k$,
\item[(a-vi)] $\displaystyle \lim_{n\to \infty}\max_{\tau\in \deltae (T(A)) } | \tau (a_{i,n,m_n}) - f_{i,n} (\tau) | = 0$ for $i=0,1,...,N$,
\item[(a-vii)]$\displaystyle \lim_{n\to\infty} \max_{\tau\in \deltae (T(A))} | \tau(a_{0,n,m_n} \psi_{i,n}(x)) - \tau (a_{0,n,m_n})\cdot \tau (\psi_{i,n}(x)) | =0$ for $x\in M_k$. 
\end{enumerate}
Finally, we define completely positive maps $\varphi_{l,n}:M_k \rightarrow A$, $l=0,1,...,c$, $n\in\N$ by 
\begin{align*}
&\varphi_{l,n} (x) = a_{0,n,m_n}^{1/2} \psi_{l,n} (x) a_{0,n,m_n}^{1/2},\quad l=0,1,...,c-1,\ x\in M_k, \\
& \varphi_{c,n} (x) = \sum_{i=1}^N a_{i,n,m_n}^{1/2} \varphi_{\tau_i} (x) a_{i,n,m_n}^{1/2}, \quad x\in M_k.
\end{align*} 
It is straight forward to check that these positive maps satisfy the desired conditions (i)'$\sim$(v)'. 
\end{proof}
\begin{proof}[Proof of Theorem \ref{MainThm}.]
By Lemma \ref{MainLem} and (i) of Corollary \ref{CorDelta} we obtain a unital $*$-homomorphism from $\Delta_{d,k}$ to $\Atcentral$. Since $\Delta_{d,k}$ contains $M_k$ unitally, (ii) of Corollary \ref{CorDelta}, we can conclude the proof.
\end{proof}

{\bf Acknowledgments.}
This work was done during the author's
stay at the University of Oregon. The author would like to thank N. C. Phillips and Kai Wang for their kind help and warm hospitality in Oregon. 
The preliminary work of this paper was done when the author
visited The Research Center for Operator Algebras at East China Normal University in Shanghai, for Operator Algebras Program 2012 Apr. The author also would like  to thank H. Lin for this opportunity and generous hospitality.

This work was supported by the JSPS Research Fellowships for Young Scientists.


\begin{thebibliography}{99}
\bibitem{Bla}
B. Blackadar, 
\textit{Comparison theory for simple C$^*$-algebras},
 in ``Operator Algebras and Applications'' (D. E. Evans and M. Takesaki, Eds.), London Math. Soc. Lecture Notes Ser., Vol. 135, Cambridge Univ. Press, London/New York, 1989.

\bibitem{Con} A. Connes, 
\textit{ Classification of injective factors: cases II$_1$, II$_\infty$, III$_{\lambda}$, $\lambda\neq 1$}, 
Ann. Math. 104 (1976), 73--115.
\bibitem{CP} J. Cuntz and G. K. Pedersen,
 \textit{ Equivalence and traces on C$^*$-algebras},
 J. Funct. Anal. 33 (1979), 135--164.
\bibitem{DT} M. Dadarlat and A. S. Toms
\textit{ Ranks of operators in simple C$^*$-algebras}, 
 J. Funct. Anal. 259 (2010), no. 5, 1209--1229.
\bibitem{DW} M. Dadarlat and W. Winter
\textit{Trivialization of $C(X)$-algebras with strongly self-absorbing fibres},
Bull. Soc. Math. France 136 (2008), no. 4, 575--606.
\bibitem{Ed} D. A. Edwards
\textit{A Weierstrass-Stone theorem for Choquet simplexes}
Annales de l'institut Fourier, 18, no. 1 (1968), p. 261-282.
Summer School on Topological Algebra Theory. September 6--16, 1966.
\bibitem{Ell} G. A. Elliott
\textit{ On approximately finite-dimensional von Neumann algebras. II },
Canad. Math. Bull. 21 (1978), no. 4, 415--418.
\bibitem{EL} E. G. Effros and C. Lance
\textit{ Tensor products of operator algebras},
Adv. Math. 25 (1977), no. 1, 1--34. 
\bibitem{Engel} R. Engelking
\textit{General topology},
Polish Scientific Publishers, Warsaw, 1977. 626 pp.
\bibitem{Goo} K. R. Goodearl
\textit{ Partially ordered abelian groups with interpolation},
Mathematical Surveys and Monographs, 20. American Mathematical Society, Providence, RI, 1986. xxii+336 pp.
\bibitem{GH} 
K. R. Goodearl and  Handelman.
\textit{Rank functions and $K_0$ of regular rings},
J. Pure Appl. Algebra 7 (1976) 195--216.
\bibitem{HRW} I. Hirshberg, M. R\o rdam, and W. Winter,
\textit{$C_0 (X)$ -algebras, stability and strongly self-absorbing C$^*$-algebras} 
 Math. Ann. 339 (2007), no. 3, 695--732.
\bibitem{JS} X. Jiang and H. Su, 
\textit{On a simple unital projectionless C$^*$-algebra}, 
Amer. J. Math. 121 (1999), 359--413.
\bibitem{Joh} B. E. Johnson, 
\textit{ Approximate diagonals and cohomology of certain annihilator Banach algebras}, 
Amer. J. Math. 94 (1972), 685--698. 
\bibitem{JohText} B. E. Johnson,
\textit{ Cohomology in Banach algebra},
Mem. American Math. Soc. 127 (1972).
J. Reine Angew. Math. 525 (2000), 17--53.
\bibitem{KS} A. Kishimoto and S. Sakai,
\textit{ Homogeneity of pure state spaces and related problems},
 Surikaisekikenkyusho Kokyuroku No. 1250 (2002), 22-41.

\bibitem{Lin} H. Lin, 
\textit{ Simple nuclear C$^*$-algebras of tracial topological rank one},
 J. Funct. Anal. 251 (2007), no. 2, 601--679.


\bibitem{MS1} H. Matui and Y. Sato,
\textit{$\Zjs$-Stability of Crossed Products by Strongly Outer
Actions},
Comm. Math. Phys. 314 (2012), no. 1, 193--228, (accepted: 5 July 2011). arXiv:0912.4804.
\bibitem{MS2} H. Matui and Y. Sato, 
\textit{ Strict comparison and $\mathcal{Z}$-absorption of nuclear C$^*$-algebras}, to appear in Acta Math. 
\bibitem{MS3} H. Matui and Y. Sato,
\textit{$\Zjs$-Stability of Crossed Products by Strongly Outer Actions II },
preprint, arXiv:1205.1590.
\bibitem{Pea} A. R. Pears
\textit{ Dimension theory of general spaces},
Cambridge University Press, Cambridge, England-New York-Melbourne, 1975.
\bibitem{Ped} G. K. Pedersen
\textit{ C$^*$-algebras and their automorphism groups }
 London Mathematical Society Monographs, 14. Academic Press, Inc., London-New York, 1979. ix+416 pp.
\bibitem{RorUHF} M. R\o rdam, 
\textit{On the structure of simple C$^*$-algebras tensored with a UHF-algebra, II},
J. Funct. Anal. 107 (1992), 255--269.
\bibitem{Ror} M. R\o rdam, 
\textit{ The stable and the real rank of $\mathcal{Z}$-absorbing C$^*$-algebras},  Internat. J. Math. 15 (2004), no. 10, 1065--1084.
\bibitem{RW} M. R\o rdam and W. Winter, 
\textit{ The Jiang-Su algebra revisited}, 
J. Reine. Angew. Math. 642 (2010), 129--155. 
arXiv:0801.2259
\bibitem{RD} B. Russo and H. A. Dye,
\textit{A note on unitary operators in C$^*$-algebras.}
Duke Math. J. 33 1966 413--416. 
\bibitem{TW} A. S. Toms and W. Winter,
\textit{ Minimal dynamics and the classification of C$^*$-algebras},
Proc. Natl. Acad. Sci. USA 106 (2009), no. 40, 16942--16943.
\bibitem{WinCovDim}
W. Winter,
\textit{Covering dimension for nuclear C$^*$-algebras,}
 J. Funct. Anal. 199 (2003), no. 2, 535--556.
arXiv:0905.0583
\bibitem{WZ} W. Winter and J. Zacharias ,
\textit{Completely positive maps of order zero},
Munster J. Math. 2 (2009), arXiv:0903.3290. 

\bibitem{Wol} M. Wolff,
\textit{Disjointness preserving operators on C$^*$-algebras}, 
Arch. Math. (Basel) 62 (1994), no. 3, 248--253.
\end{thebibliography}
\end{document}